\documentclass[12pt]{amsart}
\title{The \'etale open topology over the fraction\\ field of a henselian local domain}
\author{Will Johnson, Erik Walsberg, Jinhe Ye }
\email{willjohnson@fudan.edu.cn, ewalsber@uci.edu, jinhe.ye@imj-prg.fr}

\usepackage{amsmath, amssymb, amsthm, enumitem, comment}

\usepackage[normalem]{ulem}
% need for subequations
\usepackage{tikz-cd}
\usepackage[Symbol]{upgreek}
\usepackage[mathscr]{euscript}
\usepackage{fullpage} 	% without this, will have wide math-article-paper margins
\usepackage{hyperref}
\usepackage{lineno}
\usepackage[all]{xy}
\usepackage{centernot}
\usepackage{xcolor}

\DeclareFontFamily{U}{BOONDOX-calo}{\skewchar\font=45 }
\DeclareFontShape{U}{BOONDOX-calo}{m}{n}{
  <-> s*[1.05] BOONDOX-r-calo}{}
\DeclareFontShape{U}{BOONDOX-calo}{b}{n}{
  <-> s*[1.05] BOONDOX-b-calo}{}
\DeclareMathAlphabet{\mathcalboondox}{U}{BOONDOX-calo}{m}{n}
\SetMathAlphabet{\mathcalboondox}{bold}{U}{BOONDOX-calo}{b}{n}
\DeclareMathAlphabet{\mathbcalboondox}{U}{BOONDOX-calo}{b}{n}

\DeclareMathOperator*{\forkindep}{\raise0.2ex\hbox{\ooalign{\hidewidth$\vert$\hidewidth\cr\raise-0.9ex\hbox{$\smile$}}}}

\DeclareFontFamily{U}{fsy}{}
\DeclareFontShape{U}{fsy}{m}{n}{<->s*[.9]psyr}{}
\DeclareSymbolFont{der@m}{U}{fsy}{m}{n}
\DeclareMathSymbol{\der}{\mathord}{der@m}{182}

\newcommand{\mfrak}{\mathfrak{m}}
\newcommand{\pfrak}{\mathfrak{p}}

\newcommand{\Sa}[1]{\ensuremath{\mathscr{#1}}}

\newcommand{\Spec}{\operatorname{Spec}}

\newcommand{\Frac}{\operatorname{Frac}}

\newcommand{\res}{\operatorname{res}}

\newcommand{\Chara}{\operatorname{Char}}

\newtheorem*{claim-star}{Claim}
\newtheorem{theorem}{Theorem}[section] % numbered like the section
\newtheorem{lemma}[theorem]{Lemma}

\newtheorem{prop-def}[theorem]{Proposition-Definition}
\newtheorem{corollary}[theorem]{Corollary}
\newtheorem{fact}[theorem]{Fact}
\newtheorem{fact-eh}[theorem]{Fact(?)}

\newtheorem{proposition}[theorem]{Proposition}
\newtheorem{proposition-eh}[theorem]{Proposition(?)}
\newtheorem*{theorem-star}{Theorem}
\newtheorem*{conjecture-star}{Conjecture}
\newtheorem*{question-star}{Question}
\newtheorem*{lemma-star}{Lemma}

\theoremstyle{definition}

\newtheorem{remark}[theorem]{Remark}
\theoremstyle{remark}

\newcommand{\Aa}{\mathbb{A}}

\newcommand{\Gg}{\mathbb{G}}
\newcommand{\Qq}{\mathbb{Q}}
\newcommand{\Rr}{\mathbb{R}}
\newcommand{\Zz}{\mathbb{Z}}
\newcommand{\Nn}{\mathbb{N}}
\newcommand{\Cc}{\mathbb{C}}

%\linenumbers

\setlength{\parindent}{0cm}

\begin{document}

\maketitle

\begin{abstract}
Suppose that $R$ is a local domain with fraction field $K$.
If $R$ is Henselian then the $R$-adic topology over $K$ refines the \'etale open topology.
If $R$ is regular then the \'etale open topology over $K$ refines the $R$-adic topology.
In particular the \'etale open topology over $L((t_1,\ldots,t_n))$ agrees with the $L[[t_1,\ldots,t_n]]$-adic topology for any field $L$ and $n \ge 1$.
\end{abstract}

\section{Introduction}
Throughout, $K$ and $L$ are fields, all rings are commutative with unit, the ``dimension'' of a ring is the Krull dimension, and by convention a ``local ring'' is not a field.
The \'etale open topology (or $\Sa E_K$-topology) on a $K$-variety $V$ is a topology on the set $V(K)$ of $K$-points introduced in \cite{firstpaper}; see Section~\ref{section:conventions and background} for definitions.  The $\Sa E_K$-topology recovers the ``natural'' topology on $V(K)$ in several cases of interest.  For example, when $K$ is algebraically closed, the \'etale open topology is the Zariski topology on $V(K)$, and when $K$ is $\Rr$ or $\Qq_p$ (or any local field other than $\Cc$), the \'etale open topology is the analytic topology on $V(K)$.  The \'etale open topology is primarily of interest when $K$ is a large field in the sense of Pop \cite{pop-embedding}.  In fact, the \'etale open topology is discrete when $K$ is non-large~\cite[Theorem C]{firstpaper}.  (See \cite{Pop-little, open-problems-ample} for more information on largeness, including a definition.)  The \'etale open topology has applications to large fields: it yielded a classification of model-theoretically stable large fields, and provided new insights into several known results on large fields \cite[Theorem D, 9.2]{firstpaper}.

 If $\Sa T$ is a field topology on $K$, then $\Sa T$ induces a topology on $V(K)$ in a natural way.\footnote{When $V$ is $\Aa^n$ we take the product topology on $K^n$.
When $V \subseteq \Aa^n$ is an affine variety we take the subspace topology on $V(K) \subseteq K^n$.
When $V$ is an arbitrary variety, we cover $V$ by affine open subvarieties and glue the corresponding topologies.
See \cite[Chapter I.10]{red-book} for details.}
We call the resulting topology the $\Sa T$-topology on $V$.  In cases like $K = \Rr$ or $K = \Qq_p$, the \'etale open topology is induced by a field topology.  On the other hand, this fails when $K$ is algebraically closed.  (The Zariski topology on $V \times W$ is not the product topology of the Zariski topologies on $V$ and $W$.)  One might hope to characterize when the \'etale open topology is induced by a field topology.  Some results in this direction were obtained in \cite[Theorem B and Section 8]{firstpaper}.  In the current paper, we give a new example where the \'etale open topology is induced by a field topology, namely, when $K$ is the fraction field of a Henselian regular local ring such as $\Cc[[x,y]]$.

Fact~\ref{fact:canonical} describes what we have established so far with Tran concerning the relationship between the \'etale open and other topologies, in order~\cite[Theorem A, Proposition 6.1, Proposition 6.14, Theorem B, Theorem 6.15, Theorem B]{firstpaper}.

\begin{fact}
\label{fact:canonical}
Suppose that $V$ is a $K$-variety and $v$ is a non-trivial valuation on $K$.
\begin{enumerate}
\item The $\Sa E_K$-topology on $V(K)$ refines the Zariski topology.
\item If $K$ is separably closed then the $\Sa E_K$-topology on $V(K)$ agrees with the Zariski topology.
\item If $<$ is a field order on $K$ then the $\Sa E_K$-topology on $V(K)$ refines the $<$-topology.
\item If $K$ is real closed then the $\Sa E_K$-topology on $V(K)$ agrees with the order topology.
\item  If the Henselization of $(K,v)$ is not separably closed then the $\Sa E_K$-topology on $V(K)$ refines the $v$-topology.
(Hence if the value group of $v$ is not divisible or the residue field of $v$ is not algebraically closed then the $\Sa E_K$-topology on $V(K)$ refines the $v$-topology.)
\item If $K$ is not separably closed and $v$ is Henselian then the $\Sa E_K$-topology on $V(K)$ agrees with the $v$-topology.
\end{enumerate}
\end{fact}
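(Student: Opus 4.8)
The plan is to handle the six items under one rubric. A basis for the $\Sa E_K$-topology on $V(K)$ is given by the images $f(W(K))$ of étale morphisms $f\colon W\to V$, so any such image is automatically $\Sa E_K$-open. Hence a refinement claim ``$\Sa E_K$ refines $\Sa T$'' reduces to covering each basic $\Sa T$-open set by images of étale morphisms, while an agreement claim additionally demands that every image $f(W(K))$ be $\Sa T$-open. The refinement directions are comparatively soft; the agreement directions, which assert that étale morphisms are local homeomorphisms for $\Sa T$, are where the real work lies.

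For (1), a Zariski open immersion $U\hookrightarrow V$ is étale, so $U(K)$ is itself a basic $\Sa E_K$-open; this needs no hypothesis on $K$. For (2), an étale morphism is Zariski-open as a map of schemes, so $f(W)$ is a Zariski-open subscheme of $V$; over a separably closed $K$ every point of $f(W)(K)$ lifts to $W(K)$, because each nonempty fiber is an étale $K$-scheme, i.e.\ a finite product of finite separable extensions of $K$, all of which equal $K$ and thus carry a $K$-point. Therefore $f(W(K))=f(W)(K)$ is Zariski open and the two topologies coincide.

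For (3) I would use the squaring map: for each $x>0$ the morphism $\Gg_m\to\Aa^1$, $t\mapsto xt^2$, is étale in characteristic $\neq 2$ and its image $\{xt^2:t\in K^\times\}$ is an $\Sa E_K$-open neighborhood of $x$ contained in the positive ray, so the ray is $\Sa E_K$-open; invariance of the $\Sa E_K$-topology under the affine automorphisms $x\mapsto x+a$ and $x\mapsto cx$ then promotes this to every $<$-interval. For the agreement in (4), where $K$ is real closed, the positive ray is exactly the set of nonzero squares, and the reverse inclusion follows from the implicit function theorem over real closed fields, which produces a $<$-continuous local section of any étale $f$ through each point of $W(K)$ and hence shows every image $f(W(K))$ is order-open.

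For the valued cases I expect the main obstacle to appear in (5): under only the hypothesis that the Henselization of $(K,v)$ is not separably closed, one must produce, around each point, an étale morphism whose $K$-points are trapped inside a prescribed valuation ball. The non-separably-closed Henselization supplies a nontrivial finite separable extension, and hence enough étale covers to carve out such balls, but arranging the containment is delicate; this is precisely the content imported from \cite[Proposition 6.14, Theorem B]{firstpaper}. Granting it, (6) follows: when $v$ is itself Henselian and $K$ is not separably closed, the refinement from (5) applies, and the converse inclusion comes from Hensel's lemma, which shows that an étale morphism admits a $v$-continuous local section through any $K$-point and so has $v$-open image. The local-section principle — that étale maps are local homeomorphisms for the order, respectively $v$-adic, topology — is the uniform engine behind the agreement statements (4) and (6), and it is exactly where real closedness, respectively Henselianity, is indispensable.
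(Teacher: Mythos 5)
The paper offers no proof of this statement: Fact~\ref{fact:canonical} is imported wholesale from \cite{firstpaper}, with the six items attributed in order to Theorem A, Proposition 6.1, Proposition 6.14, Theorem B, Theorem 6.15, and Theorem B of that paper. So there is no in-paper argument to compare against, and what you have written is a reconstruction of the proofs in the cited source. As such it is essentially faithful and correct in outline: (1) via open immersions being \'etale; (2) via openness of \'etale morphisms of schemes together with the fact that a nonempty \'etale scheme over a separably closed field has a rational point; (3) via the \'etale map $t \mapsto xt^2$ exhibiting the positive ray as $\Sa E_K$-open, promoted to general varieties by the reduction of Fact~\ref{fact:refine}; (4) and (6) via the local-section (implicit function theorem) principle for real closed, respectively Henselian, fields. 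These are the standard arguments and match the cited proofs. The one place where you supply no argument is item (5), which you correctly identify as the genuinely hard case and defer to \cite{firstpaper}; since the present paper defers the entire Fact to the same source, this is consistent, but your text establishes (5) only modulo that citation. You also do not address the parenthetical in (5), which follows because Henselization preserves the value group and residue field, while a separably closed non-trivially valued field has divisible value group and algebraically closed residue field.
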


If $R$ is a Henselian valuation ring with fraction field $K$, then Fact~\ref{fact:canonical}.6 says that the \'etale open topology over $K$ is the valuation topology, unless $K$ is separably closed or the valuation is trivial.  For example, the \'etale open topology over $L((t))$ is the valuation topology.  It is natural to ask whether Fact~\ref{fact:canonical}.6 generalizes to fraction fields of Henselian local domains such as $L[[t_1,\ldots,t_n]]$.  We first need an analogue of the valuation topology.  If $R$ is a local domain with fraction field $K$, then
$\{\alpha R + \beta : \alpha \in K^\times, \beta \in K\}$ is a basis of opens for a non-discrete Hausdorff field topology on $K$ \cite[Theorem~2.2]{Prestel1978}.
We call this the \textbf{$R$-adic topology} on $K$.
This is the coarsest ring topology on $K$ with $R$ open.
When $R$ is a non-trivial valuation ring, the $R$-adic topology is the valuation topology.
As mentioned above, the $R$-adic topology induces a topology on $V(K)$ for each $K$-variety $V$.

\begin{theorem}
\label{thm:main}
Suppose that $R$ is a local domain with fraction field $K$ and $V$ is a $K$-variety.
\begin{enumerate}
\item If $R$ is Henselian then the $R$-adic topology on $V(K)$ refines the $\Sa E_K$-topology.
\item If $R$ is regular then the $\Sa E_K$-topology on $V(K)$ refines the $R$-adic topology.
    \end{enumerate}
Hence the \'etale open topology over the fraction field $L((t_1,\ldots,t_n))$ of $L[[t_1,\ldots,t_n]]$ agrees with the $L[[t_1,\ldots,t_n]]$-adic topology.
\end{theorem}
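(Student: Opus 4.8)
The plan is to prove the two refinements separately and then combine them. Part (1) I will prove directly for every $V$ by showing that images of \'etale maps are $R$-adically open. Part (2) I will reduce to the single statement that $R\subseteq K=\Aa^1(K)$ is $\Sa{E}_K$-open: by the footnote conventions the $R$-adic topology on $\Aa^n(K)=K^n$ is the product topology and restricts to the subspace topology on affine subvarieties, while the $\Sa{E}_K$-topology makes every morphism continuous (the preimage of an \'etale image under $f$ is the image of a base change, which is again \'etale), so $\Sa{E}_K$ on $\Aa^n$ refines the product of the $\Sa{E}_K$-topologies on the factors and restricts to subspace topologies on subvarieties. Combining these, once $\Sa{E}_K$ refines the $R$-adic topology on $\Aa^1$ it does so on every $V$; and since the affine automorphisms $x\mapsto\alpha x+\beta$ are $\Sa{E}_K$-homeomorphisms, the basic $R$-adic opens $\alpha R+\beta$ are $\Sa{E}_K$-open as soon as $R$ itself is.

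For part (1), let $f\colon W\to V$ be \'etale, $b\in W(K)$, and $a=f(b)$; since such images form a basis for the $\Sa{E}_K$-topology it suffices to find an $R$-adic neighborhood of $a$ inside $f(W(K))$. Working in an affine chart and using that $f$ is locally a standard \'etale morphism, I present $W$ near $b$ as $\{(v,y):Q(v,y)=0\}$ with $Q$ monic in $y$, $\partial Q/\partial y(b)\ne 0$, and $f$ the projection to $v$. After scaling the coordinates and clearing denominators so that the data lie in $R$ and $b$ reduces to a simple root modulo $\mm$, the multivariate Hensel's lemma for the Henselian local ring $R$ produces, for every $a'\in V(K)$ in a suitable $R$-adic neighborhood of $a$, a $y'\in K$ near $b_y$ with $Q(a',y')=0$ and $\partial Q/\partial y(a',y')$ a unit; then $(a',y')\in W(K)$ and $a'=f(a',y')$ lies in the image. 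The one delicate point is the passage from the field $K$ to the local ring $R$, i.e.\ an implicit-function theorem over a Henselian local domain rather than over a valued field; this is the expected mechanism generalizing Fact~\ref{fact:canonical}.6.

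For part (2) I must show $R$ is $\Sa{E}_K$-open. First I dispose of degenerate cases: if $K$ is not large then $\Sa{E}_K$ is discrete \cite[Theorem C]{firstpaper} and refines everything, so I may assume $K$ is large; and $K$ is never separably closed, since any height-one prime $\pp$ of the normal domain $R$ yields a DVR $R_\pp$ with fraction field $K$, while a field carrying a $\Zz$-valued valuation admits $n$-th roots obstructions and so is not separably closed. \emph{Closedness} of $R$ is then easy: by normality $R=\bigcap_{\operatorname{ht}\pp=1}R_\pp$, each $R_\pp$ is a DVR whose Henselization has value group $\Zz$ and is therefore not separably closed, so Fact~\ref{fact:canonical}.5 makes each $R_\pp$ (indeed $\Sa{E}_K$-clopen), whence the intersection $R$ is $\Sa{E}_K$-closed. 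For \emph{openness} I would use the decomposition $R=R_{(t)}\cap R[1/t]$ for a regular parameter $t$ (valid by normality: an element with no pole along $(t)$ and none away from $(t)$ has no pole), together with the observation that the $R$-adic topology is the join of the $R_{(t)}$-adic and $R[1/t]$-adic topologies, since both $R_{(t)}$ and $R[1/t]$ are unions of sets $\alpha R$ while $R$ is their finite intersection. As $R_{(t)}$ is a DVR, $\Sa{E}_K$ refines its topology by Fact~\ref{fact:canonical}.5, reducing matters to the regular domain $R[1/t]$ of smaller dimension and suggesting an induction on Krull dimension.

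The main obstacle is exactly this openness, and it cannot be reached from the divisorial valuations alone: for $\dim R\ge 2$ the set $\alpha R=\bigcap_\pp\{x:v_\pp(x)\ge v_\pp(\alpha)\}$ is an \emph{infinite} intersection of valuation balls, so the $R$-adic topology is strictly finer than the join of the divisorial valuation topologies, and one must genuinely use \'etale covers of $\Aa^1$ by curves over $K$ whose images land inside $R$. Concretely, I expect to manufacture these from the regularity of $R$ --- via a regular system of parameters, or via the ind-smoothness of $R$ furnished by N\'eron--Popescu desingularization --- arranging the defining equations so that membership in the image controls the poles of the coordinate and forces the image into $R$ while still covering a neighborhood of each point. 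Producing such covers and checking that finitely many of their images give an $\Sa{E}_K$-neighborhood basis at a point of $R$ is the crux of part (2). Granting both parts, the final assertion is immediate: $L[[t_1,\dots,t_n]]$ is Henselian and regular of dimension $n\ge 1$, so parts (1) and (2) together show that the $\Sa{E}_K$-topology over $L((t_1,\dots,t_n))$ agrees with the $L[[t_1,\dots,t_n]]$-adic topology on every $V(K)$.
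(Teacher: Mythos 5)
Your overall architecture (reduce part (2) to the $\Sa E_K$-openness of $R \subseteq K$, which is Lemma~\ref{lem:etale-adic}, and prove part (1) by fitting an $R$-adic ball inside a standard \'etale image) matches the paper, but both halves stop short of the actual content. In part (1), the step you yourself flag as ``delicate'' is a genuine gap, not a routine invocation of Hensel's lemma: at a point of the standard \'etale presentation one only knows $\partial f/\partial y(0,0) \ne 0$ in $K$, and no rescaling makes this derivative a unit of $R$ while keeping the coefficients in $R$ and the constant term small, so your plan of reducing to ``a simple root modulo $\mfrak$'' does not go through. What is needed is the refined statement (Fact~\ref{fact:hensel-ish}): if $g \in R[x]$, $g'(\alpha) \ne 0$ and $g(\alpha) \equiv 0 \pmod{g'(\alpha)^2 \mfrak}$, then $g$ has a root in $\alpha + (g(\alpha)/g'(\alpha))R$. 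Over a Henselian valuation ring this is classical, but over a general Henselian local ring it requires an argument; the paper proves it by substituting a cleverly chosen auxiliary polynomial whose reduction mod $\mfrak$ is $x$. The containment $\alpha^* - \alpha \in (g(\alpha)/g'(\alpha))R$ is moreover essential to keep the root inside the region where $g \ne 0$, which your sketch does not address.

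In part (2) the gap is larger. Your reduction to openness of $R$, and your observation that $R$ is $\Sa E_K$-closed, are fine, and you correctly diagnose that the divisorial valuations alone cannot produce the $R$-adic topology when $\dim R \ge 2$; but the construction you defer to ``regular systems of parameters or N\'eron--Popescu'' is precisely the missing idea, and the proposed induction via $R = R_{(t)} \cap R[1/t]$ does not make sense as stated, since $R[1/t]$ is not a local ring and carries no ``adic'' topology in the sense of this paper. The paper's key input is the Jensen--Lenzing observation (Lemma~\ref{lem:JL}, reduced to the Henselian case via Lemma~\ref{lem:regular-lem}) that for a regular local ring of dimension $\ge 2$ the solution set of $1 + \alpha^4 = \beta^2$ (resp.\ $1 + \alpha^3 = \beta^3$ in residue characteristic $2$) is contained in $R \cup R^{-1}$. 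Hence $\Omega = \{a \in K : 1 + a^4 \in (K^\times)^2\}$ is an $\Sa E_K$-open neighbourhood of $0$ inside $R \cup R^{-1}$ by Fact~\ref{fact:basic}, and intersecting $\Omega$ with the valuation ideal of a single divisorial localization $R_\pfrak$ (which is $\Sa E_K$-open by Fact~\ref{fact:canonical}.5) eliminates the $R^{-1}$ branch and lands inside $R$. Without this, or some equivalent explicit \'etale cover, part (2) remains unproved.
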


See Section~\ref{section:conventions and background} for the definitions of regularity and Henselianity.
Examples of  regular Henselian local rings are the ring $L[[t_1,\ldots,t_n]]$ of formal power series for any field $L$, and the ring $L\{t_1,\ldots,t_n\}$ of convergent power series for $L$ a local field.  Additionally, the ring $L(t_1,\ldots,t_n)^{\mathrm{alg}} \cap L[[t_1,\ldots,t_n]]$ is a regular Henselian local ring for any field $L$, as it is the Henselization of the regular local ring $L[t_1,\ldots,t_n]_{(t_1,\ldots,t_n)}$ (see \cite[07PX, 07PV, 0A1W, 06LN]{stacks-project}).

A one-dimensional regular local ring is a discrete valuation ring~\cite[11.1]{Eisenbud}, so the one-dimensional case of Theorem~\ref{thm:main}.2 follows from Fact~\ref{fact:canonical}.5.
If $R$ is a Noetherian local domain of dimension at least two then the $R$-adic topology is not induced by a valuation (see Proposition~\ref{prop:V-topology}) so Theorem~\ref{thm:main} does not follow from Fact~\ref{fact:canonical}.

As noted above, the \'etale open topology is connected to the class of large fields in field theory.
Specifically, the \'etale open topology on $K = \Aa^1(K)$ is non-discrete if and only if $K$ is large~\cite[Theorem C]{firstpaper}.
Thus the first claim of Theorem~\ref{thm:main} can be seen as a topological refinement of the fact, proven in \cite[Theorem 1.1]{Pop-henselian}, that fraction fields of Henselian local domains are large.

Both refinements in Theorem~\ref{thm:main} can be strict.
For example the localization of $L[t_1,\ldots,t_n]$ at the ideal generated by $t_1,\ldots,t_n$ is a regular local ring $R$ with non-large fraction field $L(t_1,\ldots,t_n)$, so the \'etale open topology over $L(t_1,\ldots,t_n)$ is discrete and hence strictly refines the $R$-adic topology.
Theorem~\ref{thm:example} shows that the other refinement may also be strict.

\begin{theorem}
\label{thm:example}
Fix a prime $p$.
There is a subring $E$ of $\Zz_p$ such that:
\begin{enumerate}
\item $E$ is a one-dimensional Noetherian Henselian local domain,
\item $\Qq_p$ is the fraction field of $E$, and
\item the $E$-adic topology on $\Qq_p$ strictly refines the $p$-adic topology.
\end{enumerate}
\end{theorem}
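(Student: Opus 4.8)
The plan is to exploit the two mechanisms already isolated in Theorem~\ref{thm:main} and Fact~\ref{fact:canonical}: Henselianity forces one inclusion of topologies for free, while the failure of regularity is what will make the refinement strict. Since $(\Qq_p, v_p)$ is Henselian and $\Qq_p$ is not separably closed, Fact~\ref{fact:canonical}.6 identifies $\Sa E_{\Qq_p}$ with the $p$-adic topology, so I only need to produce $E \subseteq \Zz_p$ satisfying (1) for which the $E$-adic topology is strictly finer than the $p$-adic topology. The refinement itself is automatic: for any subring $E \subseteq \Zz_p$ the set $\Zz_p = \bigcup_{x \in \Zz_p}(x+E)$ is $E$-adically open, so the $E$-adic topology is a ring topology in which $\Zz_p$ is open and hence refines the coarsest such topology, namely the $p$-adic one (this is also Theorem~\ref{thm:main}.1 applied to the Henselian $E$, combined with the identification above).

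The first reduction restates strictness as a purely algebraic condition. The ring $E$ is an $E$-adic neighbourhood of $0$, so if the two topologies agreed then $E$ would be $p$-adically open and hence contain $p^N\Zz_p$ for some $N$. But $\Zz_p/p^N\Zz_p$ is finite, so $p^N\Zz_p \subseteq E$ would make $\Zz_p$ a finite $E$-module, forcing $\Zz_p$ to be the integral closure of $E$ in $\Qq_p$ and to be module-finite over $E$. Hence it suffices to arrange that the integral closure of $E$ in $\Qq_p = \Frac(E)$ is \emph{not} a finite $E$-module. In other words, $E$ must be a one-dimensional Noetherian Henselian local domain with fraction field $\Qq_p$ and non-finite normalization; such an $E$ is necessarily non-excellent, which is precisely why Theorem~\ref{thm:main}.2 does not apply and strictness becomes possible.

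The core of the argument is therefore the construction of such an $E$, and this is where the real work lies. I would first build a non-Henselian model $E_0 \subseteq \Zz_p$: a one-dimensional Noetherian local domain with $\Frac(E_0)=\Qq_p$, unibranch (so that its normalization is local), and analytically ramified, i.e. with non-finite normalization. This is a $p$-adic incarnation of Nagata's analytically ramified local domains; the delicate points are (a) keeping $E_0$ Noetherian while its fraction field is all of $\Qq_p$ -- here the transcendence of $\Zz_p$ over $\Qq$ must be ``used up'' by a pinching-type modification of $\Zz_p$ at its closed point rather than by adjoining transcendentals, which would raise the dimension -- and (b) exhibiting an explicit sequence $x_n \in \Zz_p$ with $v_p(x_n) \to \infty$ and $x_n \notin E_0$ that witnesses non-finiteness of the normalization. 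I expect (a) to be the main obstacle: realizing the full fraction field $\Qq_p$ together with Noetherianity and non-finite normalization simultaneously is a genuine balancing act and is the technical heart of the theorem.

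Finally I would pass to the Henselization $E := E_0^h$, realized inside the Henselian local ring $\Zz_p$ via its universal property. Henselization of a Noetherian local ring is Noetherian and preserves dimension; unibranchness of $E_0$ makes $E_0^h$ a domain; and since $E_0^h \subseteq \Zz_p$ with $\Frac(E_0^h)$ algebraic over $\Qq_p$, we get $\Frac(E_0^h)=\Qq_p$. It remains to see that non-finiteness of the normalization survives: normalization is compatible with the ind-\'etale extension $E_0 \to E_0^h$ and finiteness of modules descends along the faithfully flat map $E_0 \to E_0^h$, so the normalization of $E$ is again non-finite; equivalently, one checks directly that the witnessing elements $x_n$ remain outside $E_0^h$, since Henselization only adjoins algebraic elements of a controlled kind. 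With $E$ so constructed, the reduction of the second paragraph yields strict refinement, completing the proof.
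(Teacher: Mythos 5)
Your reductions are sound and in fact sharpen what the paper does: the refinement $E\subseteq \Zz_p \Rightarrow$ ($E$-adic refines $p$-adic) is exactly the paper's Lemma~\ref{lem:adic-refinement}, and your observation that agreement of the topologies would force $p^N\Zz_p\subseteq E$ and hence make $\Zz_p$ a finite $E$-module is a correct (and clean) way to see that strictness follows from non-finite normalization, consistent with the paper's remark that its $E$ has non-reduced completion and is therefore not excellent. But there is a genuine gap, and you name it yourself: the construction of the ring. Everything in your third paragraph is a wish list --- ``a $p$-adic incarnation of Nagata's analytically ramified local domains,'' ``a pinching-type modification of $\Zz_p$ at its closed point,'' ``I expect (a) to be the main obstacle'' --- with no actual candidate for $E_0$ and no argument that a one-dimensional Noetherian local domain with fraction field all of $\Qq_p$ and non-finite normalization exists inside $\Zz_p$. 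Since producing such a ring \emph{is} the theorem, the proof is not complete.

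For comparison, the paper's construction is the Ferrand--Raynaud-style pinching you gesture at, made explicit by a derivation: fix a nonzero $\Qq$-linear derivation $\der\colon\Qq_p\to\Qq_p$ and set $E=\{a\in\Zz_p:\der a\in\Zz_p\}$. The one lemma doing all the work is that the graph of $\der$ is $p$-adically dense in $\Qq_p^2$; this simultaneously gives $\Frac(E)=\Qq_p$ (solving your obstacle (a)) and the non-openness of $E$ in the $p$-adic topology (so the paper never needs your normalization criterion --- strictness is checked directly). Henselianity is proved directly from Henselianity of $\Zz_p$ via the identity $\der(g(a^*))=(\der g)(a^*)+g'(a^*)\der(a^*)$, rather than by Henselizing a non-Henselian model; this sidesteps the issues in your last paragraph (injectivity of $E_0^h\to\Zz_p$, and the direction of your flat-descent argument for non-finiteness of normalization, which as stated transfers finiteness the wrong way and really needs that normalization commutes with the ind-\'etale base change $E_0\to E_0^h$). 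Finally, Noetherianity and one-dimensionality --- which you would also need for $E_0$ and do not address --- are obtained by showing every ideal is two-generated and invoking Cohen's theorem. If you supply an explicit $E_0$ (or adopt the derivation construction), your outline can be completed, but as written the theorem's content is missing.
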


By Fact~\ref{fact:canonical}.6 the \'etale open topology over $\Qq_p$ agrees with the $p$-adic topology, hence the $E$-adic topology on $\Qq_p$ strictly refines the  \'etale open topology.
Therefore some assumption like regularity is needed in Theorem~\ref{thm:main}.2. In an upcoming paper with Dittmann, we will show that excellence is another sufficient condition.  In particular, we will show that for an excellent Henselian local domain $R$, the $R$-adic topology on the fraction field is the \'etale open topology.  (In the present paper, the ring $E$ of Theorem~\ref{thm:example} has non-reduced completion and is hence not excellent; see Remark~\ref{remark} below.)  More generally, the upcoming paper suggests that the comparison between the $R$-adic topology and \'etale open topology is connected to resolution of singularities over $R$.

We now discuss an application to definable sets.
``Definable'' means ``first-order definable in the language of rings, possibly with parameters'' and a field is \textit{Henselian} if it admits a non-trivial Henselian valuation.
It is a well-known and important fact that definable sets in characteristic zero Henselian fields are well behaved with respect to the valuation.
In particular if $K$ is a characteristic zero Henselian valued field then every definable subset of $K^n$ is a finite union of valuation open subsets of Zariski closed subsets of $K^n$~\cite{lou-dimension}; note that this applies to $L((t))$.
Definable sets in fraction fields of characteristic zero Henselian local domains need not be well behaved. The following roughly follows the argument in~\cite[Example 10]{Fehm-subfield} for $\mathbb{Q}((X,Y))$.
Suppose $\Chara(L) = 0$ and $n \ge 2$.
Jensen and Lenzig~\cite[Theorem~3.34]{model-theoretic-algebra} showed that $L((t_1,\ldots,t_n))$ defines $L[[t_1,\ldots,t_n]]$,
Becker and Lipschitz~\cite{becker-lipschitz} showed that $L[[t_1,\ldots,t_n]]$ defines $\Nn$, Delon~\cite{delon-multivariable} showed that $L[[t_1,\ldots,t_n]]$ uniformly defines all subsets of $\Nn$, hence $L((t_1,\ldots,t_n))$ defines the standard model of second order arithmetic.
Nevertheless, the existentially definable sets are still somewhat tame, as observed in~\cite{ansco-exis,Fehm-subfield,secondpaper}.

\begin{corollary}
\label{cor:exis-def}
Suppose that $R$ is a Henselian local domain  with fraction field $K$, $K$ is perfect, and $X \subseteq K^n$ is existentially definable.
Then there are pairwise disjoint irreducible smooth subvarieties $V_1,\ldots,V_k$ of $\Aa^n$ and $O_1,\ldots,O_k$ such that each $O_i$ is a definable $R$-adically open subset of $V_i(K)$ and $X = O_1 \cup \ldots \cup O_k$.
\end{corollary}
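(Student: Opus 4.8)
The plan is to obtain the corollary by combining a structure theorem for existentially definable sets over perfect large fields, which produces $\Sa E_K$-open pieces, with Theorem~\ref{thm:main}.1, which upgrades $\Sa E_K$-openness to $R$-adic openness. The first point to check is that the hypotheses supply the inputs such a structure theorem needs: by the theorem of Pop \cite{Pop-henselian} that the fraction field of a Henselian local domain is large, the field $K$ is large, and $K$ is perfect by assumption.

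With $K$ a perfect large field, I would invoke the existential structure theorem of \cite{secondpaper} (in the spirit of \cite{ansco-exis, Fehm-subfield}): every existentially definable $X \subseteq K^n$ admits a presentation $X = O_1 \cup \cdots \cup O_k$, where $V_1, \ldots, V_k$ are pairwise disjoint irreducible smooth subvarieties of $\Aa^n$ and each $O_i$ is a definable $\Sa E_K$-open subset of $V_i(K)$. This is the substantive input, and it already matches the desired conclusion verbatim except that the $O_i$ are produced as $\Sa E_K$-open rather than $R$-adically open subsets of $V_i(K)$.

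It then remains only to compare the two topologies. Since $R$ is a Henselian local domain, Theorem~\ref{thm:main}.1 asserts that the $R$-adic topology on $V_i(K)$ refines the $\Sa E_K$-topology; equivalently, every $\Sa E_K$-open subset of $V_i(K)$ is $R$-adically open. Applying this to each $O_i$ shows that the $O_i$ are definable $R$-adically open subsets of $V_i(K)$, with definability and the geometric conditions of irreducibility, smoothness, and pairwise disjointness untouched by the change of topology. This produces precisely the asserted decomposition of $X$.

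I expect essentially all of the difficulty to reside in the structure theorem cited in the second step rather than in the reduction carried out here: the nontrivial content is that existential definability over a perfect large field is governed by the $\Sa E_K$-topology in the stratified form above. Granting that input, the corollary follows directly from Theorem~\ref{thm:main}.1, and the only place demanding care is to confirm that the cited theorem is available in the precise packaging the statement requires---disjoint, irreducible, smooth strata---refining the union into such strata by passing to connected components and stratifying by dimension should the reference deliver it in a looser form.
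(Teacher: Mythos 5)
Your proposal is correct and is essentially the paper's own proof: the authors likewise obtain the corollary by combining the existential structure theorem of \cite[Theorem B]{secondpaper} (which supplies the decomposition into definable $\Sa E_K$-open pieces of subvarieties) with Theorem~\ref{thm:main}.1 to convert $\Sa E_K$-openness into $R$-adic openness. Your additional remarks on largeness via \cite{Pop-henselian} and on massaging the strata into pairwise disjoint irreducible smooth form are harmless elaborations of the same one-line argument.
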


Corollary~\ref{cor:exis-def} follows directly from Theorem~\ref{thm:main}.1, and the theorem, proven in~\cite[Theorem B]{secondpaper}, that if $K$ is perfect then every existentially definable subset of $K^n$ is a finite union of definable $\Sa E_K$-open subsets of Zariski closed subsets of $K^n$.

\subsection*{Acknowledgments}
We would like to thank Marcus Tressl who encouraged us to write this down, as well as two anonymous referees, who caught many errors. WJ was partially supported by the National Natural Science Foundation of China (Grant No.\@ 12101131). JY was partially supported by GeoMod AAPG2019 (ANR-DFG), Geometric and
Combinatorial Configurations in Model Theory.

\subsection{Conventions and background}
\label{section:conventions and background}
A $K$-variety is a separated  $K$-scheme of finite type, $n$ is a natural number, $\Aa^n$ is $n$-dimensional affine space over $K$, and $\Gg_m$ is the scheme-theoretic multiplicative group over $K$, i.e. $\Aa^n = \Spec K[x_1,\ldots,x_n]$ and $\Gg_m = \Spec K[y,y^{-1}]$.
We let $V(K)$ be the set of $K$-points of a $K$-variety $V$.
Recall that $\Aa^n(K) = K^n$ and $\Gg_m(K) = K^\times$.
We let $\Frac(R)$ be the fraction field of a domain $R$.

Suppose that $R$ is a local ring with maximal ideal $\mfrak$.
Then $R$ is \textbf{Henselian} if for any $g \in R[x]$ and $\alpha \in R$ such that $g(\alpha) \equiv 0 \pmod{\mfrak}$ and $g'(\alpha) \not\equiv 0 \pmod{\mfrak}$ there is $\alpha^* \in R$ such that $g(\alpha^*) = 0$ and $\alpha^* \equiv \alpha \pmod{\mfrak}$.
Likewise, $R$ is \textbf{regular} if $R$ is Noetherian and $\mfrak$ admits a $d$-element generating set, where $d = \dim R$.

We briefly recall the \'etale open topology.
Suppose that $V$ is a $K$-variety.
An \textbf{\'etale image} in $V(K)$ is a set of the form $f(X(K))$ for an \'etale morphism $f \colon X \to V$ of $K$-varieties.
The collection of \'etale images in $V(K)$ forms a basis for the \textbf{\'etale open topology} (or \textbf{$\Sa E_K$-topology}) on $V(K)$.
Fact~\ref{fact:basic} below gathers some basic facts on the \'etale open topology from \cite{firstpaper}.  Parts (\ref{part1}) and (\ref{part2}) are \cite[Theorem A, Lemma 5.3]{firstpaper}, while parts (\ref{part3}) and (\ref{part4}) are immediate consequences of (\ref{part1})--(\ref{part2}).

\begin{fact}
\label{fact:basic}
Suppose that $V \to W$ is a morphism of $K$-varieties.
Then:
\begin{enumerate}
\item \label{part1} the induced map $V(K) \to W(K)$ is $\Sa E_K$-continuous.
\item \label{part2} if $V \to W$ is \'etale then the induced map $V(K) \to W(K)$ is $\Sa E_K$-open.
\item \label{part3} the map $K \to K$, $x \mapsto \alpha x + \beta$ is an $\Sa E_K$-homeomorphism for any $\alpha \in K^\times, \beta \in K$.
\item \label{part4} if $n$ is prime to $\Chara(K)$ then $\{ \alpha^n : \alpha \in K^\times \}$ is an \'etale open subset of $K$.
\end{enumerate}
\end{fact}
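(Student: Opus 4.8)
The plan is to prove the two refinements separately and then combine them; both will reduce, through the behaviour of the two topologies on affine charts, to statements about subsets of the affine line $\Aa^1(K) = K$.

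For the \textbf{first refinement} (the Henselian case), since the $\Sa E_K$-topology is generated by étale images it suffices to show that $f(X(K))$ is $R$-adically open for every étale $f \colon X \to V$. Passing to an affine chart on $V$ and covering $X$ by standard étale pieces (over each of which $f$ is the projection $\Spec(\Oo_V[t]/(g))_{\partial_t g} \to V$ with $g$ monic in $t$), and noting that $f(X(K))$ is the union of the images of these pieces, one reduces to the following Hensel-persistence statement. Fixing $a = f(b)$ with $b = (a,t_0)$, so that $g(a,t_0)=0$ and $\partial_t g(a,t_0)\neq 0$, I claim that for every $a'$ in a small enough $R$-adic neighbourhood of $a$ the polynomial $g(a',\cdot)$ still has a simple root near $t_0$. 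To see this I would translate $t_0$ to $0$, then rescale $t$ and clear denominators so that the coefficients of $g(a',\cdot)$ lie in $R$, the constant term lies in $\mm$, and the linear coefficient is a unit; the single-variable Henselian property of $R$ then yields the root. The conditions on $a'$ are $R$-adically open because $R$ is open, because $\alpha R \subseteq \mm$ for any $\alpha \in \mm\setminus\{0\}$ (keeping the constant term in $\mm$), and because $1 + \alpha R \subseteq R^\times$ for such $\alpha$ (keeping the linear coefficient a unit). The work here is bookkeeping; the Henselian hypothesis is used exactly once, in the root-finding.

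For the \textbf{second refinement} (the regular case), I would first reduce to showing that $R$ is an $\Sa E_K$-open neighbourhood of $0$ in $K$. A basic $R$-adic box in $K^n$ is a finite intersection $\bigcap_i \pi_i^{-1}(\alpha_i R + \beta_i)$ of preimages under the coordinate projections $\pi_i \colon \Aa^n \to \Aa^1$, which are $\Sa E_K$-continuous by Fact~\ref{fact:basic}.1; by the affine homeomorphisms of Fact~\ref{fact:basic}.3 each $\alpha_i R + \beta_i$ is $\Sa E_K$-open once $R$ is, and a finite intersection of $\Sa E_K$-opens is $\Sa E_K$-open. Since $R$ is a ring, translation by an element of $R$ fixes $R$, so by Fact~\ref{fact:basic}.3 it suffices to produce a single $\Sa E_K$-open $U$ with $0 \in U \subseteq R$. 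This construction is the main obstacle. Regularity makes $R$ a normal (indeed factorial) Krull domain, so $R = \bigcap_{\pp} R_{\pp}$ over the height-one primes, each $R_{\pp}$ is a DVR whose value group $\Zz$ is non-divisible, and hence each $R_{\pp}$ is already $\Sa E_K$-open by Fact~\ref{fact:canonical}.5. The trouble is that this is a genuinely infinite intersection: when $\dim R \ge 2$ no nontrivial valuation ring is contained in $R$ (an overring of a valuation ring inside its fraction field is again a valuation ring), so $R$-adic openness cannot be extracted from any single coarser field topology, and the infinitely many divisorial conditions must be defeated at once. The plan is therefore to build $U$ directly from a regular system of parameters $t_1,\ldots,t_d$, combining the openness of $n$-th powers (Fact~\ref{fact:basic}.4) with the openness of étale maps (Fact~\ref{fact:basic}.2) to assemble an étale image that simultaneously forces every divisorial valuation to be nonnegative near $0$; this step, which exploits the power-series model of $R$ afforded by regularity, is the hardest part of the argument.

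Finally, the displayed consequence is immediate. For $R = L[[t_1,\ldots,t_n]]$ with $n \ge 1$ the maximal ideal is generated by the $n$ elements $t_1,\ldots,t_n$, so $R$ is regular, and $R$ is complete and hence Henselian. Both parts of the theorem therefore apply, the two topologies on $V(K)$ refine one another, and so they agree.
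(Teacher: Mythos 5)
You have proved the wrong statement. The statement under review is Fact~\ref{fact:basic}, the four basic properties of the \'etale open topology (functoriality/continuity of induced maps, openness of induced maps of \'etale morphisms, affine maps $x \mapsto \alpha x + \beta$ being homeomorphisms, and openness of the set of $n$-th powers for $n$ prime to $\Chara(K)$). What you have written is instead an outline of the proof of Theorem~\ref{thm:main} and its corollary about $L((t_1,\ldots,t_n))$. Worse, read as a purported proof of Fact~\ref{fact:basic} your text is circular: you explicitly invoke Fact~\ref{fact:basic}.1, .3, and .4 as tools in your second refinement, so nothing in the proposal establishes any of the four parts. (As a sketch of Theorem~\ref{thm:main} it is also incomplete where it matters: in the regular case you only promise the construction of an \'etale image inside $R$ and call it ``the hardest part''; the paper's actual route there is quite different, using the Jensen--Lenzing-type identity of Lemma~\ref{lem:JL} --- solvability of $1+\alpha^4=\beta^2$ forces $\alpha \in R$ or $1/\alpha \in R$ when $\dim R \ge 2$ --- intersected with a single divisorial valuation neighbourhood, rather than a regular system of parameters.)

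For the record, here is how Fact~\ref{fact:basic} is actually disposed of. Parts (1) and (2) are not reproved in this paper at all; they are quoted from \cite{firstpaper} (Theorem A and Lemma 5.3 there). Part (3) is an immediate consequence of (1): the map $x \mapsto \alpha x + \beta$ with $\alpha \in K^\times$ is induced by an isomorphism of $K$-varieties $\Aa^1 \to \Aa^1$ whose inverse $x \mapsto \alpha^{-1}(x - \beta)$ is again a morphism, so both it and its inverse are $\Sa E_K$-continuous. Part (4) is an immediate consequence of (2): when $n$ is prime to $\Chara(K)$, the $n$-th power morphism $\Gg_m \to \Aa^1$ is \'etale, since the derivative $n y^{n-1}$ of $y^n$ is invertible on $\Gg_m$, and hence its image $\{\alpha^n : \alpha \in K^\times\}$ is an \'etale image and in particular $\Sa E_K$-open in $K$. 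A correct submission here needed only these two short deductions together with the citations; your Hensel-persistence and divisorial-valuation material belongs to the proofs of Theorem~\ref{thm:main}.1 and \ref{thm:main}.2 in Sections \ref{section:proof-1} and \ref{section:proof-3}, not to this fact.
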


We will need Fact~\ref{fact:refine} below, proven in \cite[Lemma 4.2, Lemma 4.8]{firstpaper}.

\begin{fact}
\label{fact:refine}
Let $\Sa T$ be a Hausdorff field topology on $K$.
If the $\Sa T$-topology on each $K^n = \Aa^n(K)$ refines the $\Sa E_K$-topology, then the $\Sa T$-topology on $V(K)$ refines the $\Sa E_K$-topology for any $K$-variety $V$.
If the $\Sa E_K$-topology on $K$ refines $\Sa T$, then the $\Sa E_K$-topology on $V(K)$ refines the $\Sa T$-topology for any $K$-variety $V$.
\end{fact}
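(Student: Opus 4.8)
The plan is to prove the two refinements separately and then combine them. By \cite[Theorem~2.2]{Prestel1978} the $R$-adic topology $\Sa T$ is a non-discrete Hausdorff field topology on $K$, so Fact~\ref{fact:refine} applies to it. For part~(1), the first assertion of Fact~\ref{fact:refine} reduces the claim for all varieties to the single statement that, for every $n$, the $R$-adic topology on $K^n = \Aa^n(K)$ refines the $\Sa E_K$-topology, i.e.\ that every \'etale image in $K^n$ is $R$-adically open. For part~(2), the second assertion of Fact~\ref{fact:refine} reduces the claim to the one-variable statement that the $\Sa E_K$-topology on $K$ refines the $R$-adic topology. Since the $R$-adic opens are the unions of the basic sets $\alpha R + \beta$, and each such set is the image of $R$ under the map $x \mapsto \alpha x + \beta$, which is an $\Sa E_K$-homeomorphism by Fact~\ref{fact:basic}(\ref{part3}), part~(2) reduces further to showing that $R$ itself is $\Sa E_K$-open; and since translation by any $r \in R$ is an $\Sa E_K$-homeomorphism carrying $R$ onto $R$, it suffices to produce a single $\Sa E_K$-open set $U$ with $0 \in U \subseteq R$. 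Granting both parts, the final statement is immediate: $L[[t_1,\ldots,t_n]]$ is regular and, being a complete local ring, is Henselian, so parts~(1) and~(2) yield mutually refining topologies on each $V(K)$, hence equality.

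For part~(1) I would invoke the local structure of \'etale morphisms. Given \'etale $f \colon X \to \Aa^n$ and $a = f(x)$ with $x \in X(K)$, a standard-\'etale presentation near $x$ realizes an open neighborhood of $x$ as $\Spec\big(K[x_1,\ldots,x_n][t]/(h)\big)_g$ with $h$ monic in $t$, so that $x$ corresponds to a pair $(a,b)$ satisfying $h(a,b)=0$ and $\partial_t h(a,b) \neq 0$. The task then becomes an implicit-function statement in the $R$-adic topology: for every $a'$ in a suitable $R$-adic neighborhood of $a$ there is a $b'$ near $b$ with $h(a',b')=0$; the Zariski-open conditions $g \neq 0$ and $\partial_t h \neq 0$ persist automatically by continuity, as $K^\times$ is open in any Hausdorff field topology. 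This is where Henselianity enters, through a Hensel-type lemma: after an affine rescaling of $t$ and a clearing of denominators chosen so that the relevant polynomial acquires coefficients in $R$ and a \emph{simple} root modulo $\mfrak$, the defining property of a Henselian local ring produces the exact root $b' \in R \subseteq K$. The technical point is to perform this normalization so that one $R$-adic neighborhood of $a$ works uniformly for all nearby $a'$; the monicity of $h$ in $t$ and the nonvanishing of $\partial_t h(a,b)$ are what make the rescaling possible.

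For part~(2) the heart is the construction of the bounded set $U$ with $0 \in U \subseteq R$. The basic tool is Fact~\ref{fact:basic}(\ref{part4}): for $m$ prime to $\Chara(K)$ the set of $m$-th powers is $\Sa E_K$-open, so each preimage $\{c \in K : F(c) \in (K^\times)^m\}$ under a polynomial map $F$ is $\Sa E_K$-open by the continuity in Fact~\ref{fact:basic}(\ref{part1}). The model case is $K = \Qq_p$, $R = \Zz_p$ with $p$ odd: one checks that $\{c : 1 + pc^2 \in (K^\times)^2\}$ equals $\Zz_p$ exactly, since $c \in \Zz_p$ forces $1 + pc^2 \in 1 + p\Zz_p \subseteq (\Zz_p^\times)^2$, while a pole makes $1 + pc^2$ have odd valuation and hence fail to be a square. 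For regular $R$ the same parity-of-valuation mechanism is available at the divisorial valuations $v_\pp$ attached to the height-one primes $\pp$: regularity gives normality, so $R = \bigcap_\pp R_\pp$, and an element lies outside $R$ precisely when it has a pole at some $v_\pp$.

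The principal obstacle, and the place where regularity must do genuine work, is that this exclusion must be arranged \emph{simultaneously} at all of the infinitely many height-one primes by a single $\Sa E_K$-open set. A perturbation of the form $1 + \pi c^m$ violates the $m$-th-power condition only at the finitely many $\pp$ with $v_\pp(\pi) \not\equiv 0 \pmod m$, so it does nothing at the remaining places; and no finite intersection of such sets can be contained in $R$, since $R$ is an infinite intersection of the $R_\pp$. I expect that overcoming this will require exploiting the factorial structure of the regular local ring (each height-one prime is principal) to design a perturbation whose associated $m$-th-power condition detects a pole at \emph{every} $v_\pp$ at once, thereby confining the resulting $\Sa E_K$-open set to $R$ while retaining a neighborhood of $0$. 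This uniform construction I anticipate to be the main difficulty of the whole argument; part~(1), by contrast, is a comparatively standard Hensel's-lemma argument once the reduction to a single standard-\'etale equation is in place.
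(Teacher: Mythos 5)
You have not proved the statement at all: what you wrote is circular with respect to it. The statement under review \emph{is} Fact~\ref{fact:refine}, but your first paragraph invokes Fact~\ref{fact:refine} as a known tool to reduce Theorem~\ref{thm:main} to one-chart statements, and everything that follows is a sketch of the proofs of Theorem~\ref{thm:main}.1 and \ref{thm:main}.2 (the content of Sections 3 and 4 of the paper), not of the Fact. Nothing in your proposal addresses the actual content of Fact~\ref{fact:refine}, which is a globalization result: passing from refinement on the affine spaces $\Aa^n(K) = K^n$ (respectively, on $K = \Aa^1(K)$) to refinement on $V(K)$ for an arbitrary $K$-variety $V$. In the paper this is quoted from \cite[Lemma 4.2, Lemma 4.8]{firstpaper} rather than reproved, but a blind proof would have to supply the gluing argument: the $\Sa T$-topology on $V(K)$ is defined by charts, so one must check that affine Zariski-open charts are open in both topologies (for $\Sa T$ this is where the Hausdorff hypothesis enters, since it forces the $\Sa T$-topology to refine the Zariski topology: points are $\Sa T$-closed and polynomial maps are $\Sa T$-continuous, so hypersurfaces are $\Sa T$-closed); that a closed immersion $V \hookrightarrow \Aa^n$ induces the subspace topology on $V(K) \subseteq K^n$ for both topologies, so the first assertion descends from $K^n$ to affine $V$ and then glues; and, for the second assertion, that basic $\Sa T$-opens in $V(K)$ are finite intersections of preimages of $\Sa T$-opens of $K$ under regular functions, which are $\Sa E_K$-open by the $\Sa E_K$-continuity of morphisms (Fact~\ref{fact:basic}(\ref{part1})). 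None of these steps appear in your text.

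A secondary remark on the material you did sketch, in case it was meant for Theorem~\ref{thm:main}: your part (1) outline is broadly consonant with Section 3 of the paper (reduction to standard \'etale images plus the Hensel-type Fact~\ref{fact:hensel-ish}), but the ``principal obstacle'' you identify in part (2) --- designing a single $\Sa E_K$-open condition that detects poles at \emph{all} height-one primes simultaneously --- is resolved in the paper by a different and simpler device than the one you anticipate. Lemma~\ref{lem:JL} (a Jensen--Lenzing argument, reduced to the Henselization via Lemma~\ref{lem:regular-lem}) shows that for $\dim R \ge 2$ the set $\Omega = \{\alpha : 1 + \alpha^4 \in (K^\times)^2\}$ is already contained in $R \cup R^{-1}$; one then intersects with the valuation ideal $\pfrak R_\pfrak$ of a \emph{single} height-one prime, open by Fact~\ref{fact:canonical}.5, to discard the $R^{-1}$ alternative. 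So no uniform-over-all-primes construction of the kind you expect is needed, and your claim that no finite intersection of such power-condition sets can land inside $R$ is refuted by this two-set intersection. But this does not repair the main defect: the statement you were asked to prove remains unproved.
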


We finally prove a general lemma.

\begin{lemma}
\label{lem:etale-adic}
Suppose $R$ is a local domain and $K = \Frac(R)$.
The following are equivalent:
\begin{enumerate}
\item The $\Sa E_K$-topology on $V(K)$ refines the $R$-adic topology for any $K$-variety $V$.
\item The $\Sa E_K$-topology on $K$ refines the $R$-adic topology.
\item $R$ is an $\Sa E_K$-open subset of $K$.
\item $R$ contains a nonempty $\Sa E_K$-open subset of $K$.
\end{enumerate} 
If $K$ is additionally large and perfect then (1)-(4) is equivalent to the following:
\begin{enumerate}
\setcounter{enumi}{4}
\item $R$ contains an infinite set which is existentially definable in $K$.
\item There is a morphism $f \colon V \to \Aa^1$ of $K$-varieties such that $f(V(K))$ is infinite and contained in $R$.
\end{enumerate}
\end{lemma}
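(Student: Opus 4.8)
The plan is to prove the equivalence of (1)--(4) by elementary topological manipulations that exploit only the homogeneity of the \'etale open topology and the ring structure of $R$, and then to attach (5)--(6) in the large-and-perfect case using the two deeper inputs cited in the excerpt. For the core equivalence I would establish $(1) \Rightarrow (2)$, $(2) \Leftrightarrow (3)$, $(3) \Leftrightarrow (4)$, and $(2) \Rightarrow (1)$. The implication $(1) \Rightarrow (2)$ is the special case $V = \Aa^1$, and $(3) \Rightarrow (4)$ is trivial. For $(2) \Leftrightarrow (3)$, note that $R = 1 \cdot R + 0$ is itself a basic $R$-adic open set, while conversely every basic $R$-adic open $\alpha R + \beta$ is the image $\phi(R)$ of $R$ under the affine map $\phi(x) = \alpha x + \beta$, which is an $\Sa E_K$-homeomorphism by Fact~\ref{fact:basic}(\ref{part3}); hence $R$ is $\Sa E_K$-open if and only if every basic $R$-adic open is $\Sa E_K$-open, that is, if and only if the $\Sa E_K$-topology on $K$ refines the $R$-adic topology. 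For $(4) \Rightarrow (3)$, choose $a$ in a nonempty $\Sa E_K$-open $U \subseteq R$; after translating by $-a$ (again a homeomorphism) we may assume $0 \in U \subseteq R$, and then $R = \bigcup_{r \in R}(r + U)$ realizes $R$ as a union of $\Sa E_K$-open sets, using $r + U \subseteq r + R = R$. Finally $(2) \Rightarrow (1)$ is exactly Fact~\ref{fact:refine}, whose hypothesis is met because the $R$-adic topology is a Hausdorff field topology by \cite[Theorem~2.2]{Prestel1978}.

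For the large and perfect case I would run the cycle $(4) \Rightarrow (6) \Rightarrow (5) \Rightarrow (4)$. The key preliminary observation is that when $K$ is large every nonempty $\Sa E_K$-open subset of $K$ is infinite: if $U = \{u_1,\ldots,u_k\}$ were finite and $\Sa E_K$-open, then since finite sets are Zariski closed and the $\Sa E_K$-topology refines the Zariski topology by Fact~\ref{fact:basic}(\ref{part1}), the singleton $\{u_1\} = U \cap (K \setminus \{u_2,\ldots,u_k\})$ would be $\Sa E_K$-open, forcing the topology to be discrete by the translation-homogeneity of Fact~\ref{fact:basic}(\ref{part3}) and contradicting largeness via \cite[Theorem C]{firstpaper}. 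Granting this, $(4) \Rightarrow (6)$ is immediate: a nonempty $\Sa E_K$-open $U \subseteq R$ contains a basic \'etale image $f(X(K))$ with $f \colon X \to \Aa^1$ \'etale, and this $f$ is a morphism to $\Aa^1$ whose image is infinite and contained in $R$. The implication $(6) \Rightarrow (5)$ holds because the image of a morphism is existentially definable, so an infinite morphism image contained in $R$ is an infinite existentially definable subset of $R$.

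The main obstacle is the remaining implication $(5) \Rightarrow (4)$, which is where perfectness enters and which I would deduce from the structure theorem \cite[Theorem B]{secondpaper}: for perfect $K$, every existentially definable subset of $K$ is a finite union of definable $\Sa E_K$-open subsets of Zariski closed subsets of $K$. Given an infinite existentially definable $X \subseteq R$, write $X = \bigcup_i O_i$ with each $O_i$ an $\Sa E_K$-open subset of a Zariski closed $Z_i \subseteq K$. The proper Zariski closed subsets of $K$ are finite, so the infinitude of $X$ forces some $Z_i = K$ with the corresponding $O_i$ a nonempty (hence infinite) $\Sa E_K$-open subset of $K$ contained in $X \subseteq R$, which is precisely condition (4). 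In summary, the topological core (1)--(4) is routine given the homogeneity and Zariski-refinement facts, and the entire weight of the equivalence with (5) and (6) is carried by the two external inputs \cite[Theorem C]{firstpaper} (to guarantee nonempty \'etale opens are infinite) and \cite[Theorem B]{secondpaper} (to pass from existential definability back to \'etale openness).
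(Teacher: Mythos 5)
Your proposal is correct and follows essentially the same route as the paper: (1)$\Leftrightarrow$(2) via Fact~\ref{fact:refine}, (2)$\Leftrightarrow$(3)$\Leftrightarrow$(4) via the affine homeomorphisms of Fact~\ref{fact:basic}.3 and the additive-subgroup structure of $R$, and the cycle (4)$\Rightarrow$(6)$\Rightarrow$(5)$\Rightarrow$(4) using \cite[Theorem C]{firstpaper} for infinitude of nonempty \'etale opens and \cite[Theorem B]{secondpaper} for (5)$\Rightarrow$(4). You merely spell out a few steps the paper leaves implicit (the non-discreteness argument and the unpacking of the structure theorem for existentially definable sets); the only quibble is that the refinement of the Zariski topology is Fact~\ref{fact:canonical}.1 rather than Fact~\ref{fact:basic}.1.
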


We will only use the equivalence of (1)-(4) at present.
If $K$ is not large then the \'etale open topology is discrete and hence trivially refines the $R$-adic topology.

\begin{proof}
Fact~\ref{fact:refine} shows that (1)$\Leftrightarrow$(2).
It is clear that (2)$\Rightarrow$(3).
The definition of the $R$-adic topology and Fact~\ref{fact:basic}.3 together show that (3)$\Rightarrow$(2).
The equivalence of (3) and (4) follows by Fact~\ref{fact:basic}.3 as $R$ is an additive subgroup of $K$.
Suppose that $K$ is large and perfect.
By \cite[Theorem C]{firstpaper} the $\Sa E_K$-topology on $K$ is not discrete, so any nonempty \'etale image in $K$ is infinite.
If (4) holds then $R$ contains a nonempty \'etale image, (6) follows immediately and (5) follows as \'etale images are existentially definable.
(6) implies (5) as $f(V(K))$ is existentially definable.
Finally, an infinite existentially definable subset of $K$ has nonempty $\Sa E_K$-interior~\cite[Theorem B]{secondpaper}, hence (5) implies (4).
\end{proof}

\section{\texorpdfstring{$R$}{R}-adic topologies and V-topologies}
\label{section:adic topologies and V topologies}
Let $\Sa T$ be a Hausdorff non-discrete field topology on $K$.
Then $\Sa T$ is a \textbf{V-topology} if for every neighbourhood $U \ni 0$ there is a neighborhood $V \ni 0$ such that $xy \in V \implies x \in U ~ \vee ~ y \in U$.  A field topology $\Sa T$ is a V-topology if and only if $\Sa T$ is induced by a non-trivial valuation or absolute value~\cite[Theorem~B.1]{EP-value}.

\begin{proposition}
\label{prop:V-topology}
Suppose that $R$ is an $n$-dimensional Noetherian local domain with fraction field $K$.
If $n \ge 2$ then the $R$-adic topology on $K$ is not a V-topology.
Hence if $R$ is in addition regular then the $R$-adic topology is a V-topology if and only if $n = 1$.
In particular, the $L[[t_1,\ldots,t_n]]$-adic topology on $L((t_1,\ldots,t_n))$ is a V-topology if and only if $n = 1$.
\end{proposition}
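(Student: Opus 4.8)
The plan is to refute the defining multiplicative axiom of a V-topology directly, since the $R$-adic topology is already known to be a non-discrete Hausdorff field topology. The basic neighborhoods of $0$ are exactly the sets $\alpha R$ with $\alpha \in K^\times$, so it suffices to exhibit a single neighborhood $U$ of $0$ for which no neighborhood $W$ of $0$ satisfies the implication $xy \in W \Rightarrow x \in U \vee y \in U$. I would take $U = R$. Since every neighborhood of $0$ contains some $\gamma R$, refuting the axiom for this $U$ amounts to showing that for every $\gamma \in K^\times$ there are $x,y \in K \setminus R$ with $xy \in \gamma R$.

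Fix $\gamma \in K^\times$. The key reduction is that it is enough to find a single $x \in K \setminus R$ with $\gamma/x \notin R$: given such $x$, the set $\{y : xy \in \gamma R\} = (\gamma/x)R$ is not contained in $R$ (otherwise $\gamma/x \in R$), so it contains some $y \notin R$, and then $x,y \in K\setminus R$ while $xy \in \gamma R$. Rephrasing once more, producing such an $x$ for every $\gamma$ is equivalent to the ideal-theoretic statement
\[
\bigcap_{x \in K^\times \setminus R} x R = \{0\},
\]
since a nonzero $\gamma$ lying in this intersection would be precisely a $\gamma$ for which no good $x$ exists.

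The core of the argument, and where the hypothesis $\dim R \ge 2$ is used, is proving this vanishing. I would choose any nonzero $s \in \mfrak$. By Krull's principal ideal theorem every minimal prime over $sR$ has height $\le 1$, so $\sqrt{sR}$ has height $\le 1 < 2 \le \dim R = \operatorname{ht}(\mfrak)$; since $\sqrt{sR} \subseteq \mfrak$ this forces $\sqrt{sR} \subsetneq \mfrak$, and I may pick $t \in \mfrak \setminus \sqrt{sR}$. By the choice of $t$ we have $t^k \notin sR$, equivalently $t^k/s \notin R$, for every $k \ge 1$. Now suppose $\gamma$ lies in the intersection above. Applying the membership condition to each $x = t^k/s \notin R$ gives $\gamma/x = \gamma s/t^k \in R$, i.e.\ $\gamma s \in t^k R \subseteq \mfrak^k$ for all $k$. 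By Krull's intersection theorem $\bigcap_{k} \mfrak^k = \{0\}$ in the Noetherian local ring $R$, so $\gamma s = 0$ and hence $\gamma = 0$ because $R$ is a domain. This is the main obstacle: manufacturing the ``independent direction'' $t$ with $t^k \notin sR$ for all $k$ (this is exactly what $\dim R \ge 2$ buys via the Hauptidealsatz) and then collapsing the resulting intersection with Krull's theorem.

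Finally I would assemble the stated consequences. A regular local ring of dimension $1$ is a DVR, whose $R$-adic topology is the valuation topology and hence a V-topology, while a regular local ring has dimension $\ge 1$ since it is not a field; combined with the first part this yields that for regular $R$ the $R$-adic topology is a V-topology exactly when $n = 1$. The last assertion then follows because $L[[t_1,\ldots,t_n]]$ is a regular local ring of dimension $n$ with fraction field $L((t_1,\ldots,t_n))$.
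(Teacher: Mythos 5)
Your proof is correct, and its core step takes a genuinely different route from the paper's. Both arguments start identically: the V-topology axiom is tested at the neighbourhood $U = R$, and since every neighbourhood of $0$ contains some $\gamma R$, the task reduces to producing, for each $\gamma \in K^\times$, elements $x,y \in K \setminus R$ with $xy \in \gamma R$. The paper does this via Lemma~\ref{lem:2-val}: it fixes two distinct height-one primes $\pfrak \ne \pfrak^*$ (existence quoted from Kaplansky), builds order functions $v,v^* \colon K^\times \to \Zz$ from lengths of modules over $R_\pfrak, R_{\pfrak^*}$ (well-definedness quoted from Fulton), and takes $x = \beta^n$, $y = \gamma/\beta^n$ for $\beta = \alpha^*/\alpha$ with $\alpha \in \pfrak \setminus \pfrak^*$, $\alpha^* \in \pfrak^* \setminus \pfrak$ and $n$ large. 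You instead fix a nonzero $s \in \mfrak$, use Krull's principal ideal theorem to find $t \in \mfrak \setminus \sqrt{sR}$, so that $t^k/s \notin R$ for all $k$, and then use Krull's intersection theorem $\bigcap_k \mfrak^k = \{0\}$ to force $\gamma s/t^k \notin R$ for some $k$. Your version is more elementary and self-contained, needing only the two standard Krull theorems rather than the length/order-function machinery and the fact about infinitely many height-one primes; what the paper's approach buys is the explicit pair of $\Zz$-valued homomorphisms non-negative on $R$ with $v(\beta) < 0 < v^*(\beta)$, a structure it isolates in Lemma~\ref{lem:2-val} and reuses in its subsequent discussion of the regular case. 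One small looseness: the phrase ``$\sqrt{sR}$ has height $\le 1$'' is not quite the right formulation, but the conclusion you need, namely $\sqrt{sR} \subsetneq \mfrak$, is correct, since $\sqrt{sR} = \mfrak$ would make $\mfrak$ minimal over the principal ideal $sR$ and hence of height $\le 1$, contradicting $\dim R \ge 2$. The deduction of the regular and power-series cases matches the paper's.
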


A Noetherian local ring has finite Krull dimension \cite[8.2.2]{Eisenbud}, so our use of ``$n$''
 is justified. The proof of Proposition~\ref{prop:V-topology} requires the following lemmas.

\begin{lemma}
\label{lem:2-val}
Suppose that $R$ is a local domain, $K$ is the fraction field of $R$, $v,v^* \colon K^\times \to \Zz$ are homomorphisms such that $v,v^*$ are both non-negative on $R \setminus \{0\}$, and some $\beta \in K^\times$ satisfies $v(\beta) < 0 < v^*(\beta)$.
Then the $R$-adic topology on $K$ is not a V-topology.
\end{lemma}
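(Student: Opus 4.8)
The plan is to verify directly that the defining property of a V-topology fails for the distinguished neighbourhood $U = R$ of $0$. First I would record the shape of the basic open neighbourhoods of $0$: since the sets $\alpha R + \beta$ form a basis and $0 \in \alpha R + \beta$ forces $\beta \in \alpha R$, hence $\alpha R + \beta = \alpha R$, the sets $\alpha R$ with $\alpha \in K^\times$ form a neighbourhood basis at $0$; in particular $R = 1 \cdot R$ is itself an open neighbourhood of $0$. The $R$-adic topology is already a non-discrete Hausdorff field topology, so to conclude it is not a V-topology it suffices to negate the V-condition, i.e.\ to produce a neighbourhood $U$ of $0$ so that for every neighbourhood $V$ of $0$ there are $x,y \in K$ with $xy \in V$ but $x \notin U$ and $y \notin U$.

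The key elementary observation is that the two homomorphisms detect non-membership in $R$: if $z \in R \setminus \{0\}$ then $v(z) \geq 0$ and $v^*(z) \geq 0$, so any $z \in K^\times$ with $v(z) < 0$ or with $v^*(z) < 0$ lies outside $R$. I would take $U = R$ and, given an arbitrary neighbourhood $V$ of $0$, shrink it to a basic neighbourhood $\gamma R \subseteq V$ with $\gamma \in K^\times$. Then I use the element $\beta$ of opposite valuation signs to build the witnesses: set $x = \beta^n$ and $y = \gamma \beta^{-n}$, so that $xy = \gamma \in \gamma R \subseteq V$ for every $n$. Since $v$ is a homomorphism, $v(x) = n\, v(\beta) < 0$, whence $x \notin R = U$; and $v^*(y) = v^*(\gamma) - n\, v^*(\beta)$, which becomes negative once $n$ exceeds $v^*(\gamma)/v^*(\beta)$ (here $v^*(\beta) > 0$ is essential), whence $y \notin R = U$ for all large $n$. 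Choosing such an $n$ exhibits the required $x,y$ and shows the V-condition fails at $U = R$.

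There is no serious obstacle; the entire content is in exploiting the two homomorphisms together with $\beta$ straddling $0$. The only points needing a little care are the reduction to basic neighbourhoods of $0$ (so that ``for every neighbourhood $V$'' can be replaced by ``for every $\gamma R$''), and the bookkeeping that forces $x$ and $y$ out of $R$ through two \emph{different} homomorphisms simultaneously while their product stays arbitrarily deep inside the neighbourhood filter. The point is that the factors $\beta^n$ and $\beta^{-n}$ cancel in the product, which is exactly what lets $xy$ be prescribed as $\gamma$ while $v(x)$ and $v^*(y)$ are driven negative by the complementary signs of $v(\beta)$ and $v^*(\beta)$.
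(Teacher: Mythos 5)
Your proof is correct and follows essentially the same route as the paper: both take $U = R$, reduce to a basic neighbourhood $cR$ (your $\gamma R$), and use the witnesses $\beta^n$ and $c\beta^{-n}$ together with the opposite signs of $v(\beta)$ and $v^*(\beta)$ to push both factors out of $R$ while keeping their product in $cR$. The only cosmetic difference is that you negate the V-condition directly whereas the paper argues by contradiction.
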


Before proving Lemma~\ref{lem:2-val}, we use it to show that the $L[[t,t^*]]$-adic topology on $L((t,t^*))$ is not a V-topology.  
Let $v$ be the $t$-adic valuation and $v^*$ be the $t^*$-adic valuation on $L((t,t^*))$.  
Set $\beta = t^*/t$.
Then $v,v^*$ are both non-negative on $L[[t,t^*]]$ and $v(\beta) < 0 < v^*(\beta)$.

\begin{proof}[Proof (of Lemma~\ref{lem:2-val})]
Suppose otherwise.
Then there is $c \in K^\times$ such that $xy \in cR \implies x \in R ~ \vee ~ y \in R$.

For every $n \ge 1$ we have $v(\beta^n) < 0$, and if $n$ is sufficiently large then $v^*(c/\beta^n) = v^*(c) - n v^*(\beta) < 0$.
Thus there is $n$ such that $\beta^n,c/\beta^n \notin R$,
but $(\beta^n)(c/\beta^n) \in cR$,
a contradiction.
\end{proof}

\begin{fact}[{\cite[Theorem 144]{kaprings}}]\label{fact:2-prime}
Suppose $R$ is a Noetherian domain with dimension $\geq 2$.
Then $R$ contains infinitely many distinct prime ideals of height one. 
\end{fact}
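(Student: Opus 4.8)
The plan is to argue by contradiction, playing off the tension between two standard tools: Krull's principal ideal theorem, which \emph{produces} height-one primes, and prime avoidance, which \emph{produces elements outside} a finite union of primes. Suppose toward a contradiction that $R$ has only finitely many height-one primes, say $\mathfrak{p}_1,\ldots,\mathfrak{p}_n$.

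First I would record the consequence of the Hauptidealsatz that I need. Since $R$ is Noetherian, every prime minimal over a principal ideal has height at most one; because $R$ is a domain, a prime minimal over $(a)$ with $a \neq 0$ is nonzero and hence has height exactly one. Consequently every nonzero nonunit of $R$ lies in some height-one prime, that is, in $\mathfrak{p}_1 \cup \cdots \cup \mathfrak{p}_n$.

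Next, using $\dim R \geq 2$, I would fix a chain $\mathfrak{q}_0 \subsetneq \mathfrak{q}_1 \subsetneq \mathfrak{q}_2$ of primes and set $\mathfrak{q} = \mathfrak{q}_2$, so that $\operatorname{ht}(\mathfrak{q}) \geq 2$. Since height is monotone under inclusion of primes, an inclusion $\mathfrak{q} \subseteq \mathfrak{p}_i$ would force $2 \leq \operatorname{ht}(\mathfrak{q}) \leq \operatorname{ht}(\mathfrak{p}_i) = 1$, so $\mathfrak{q}$ is contained in none of the $\mathfrak{p}_i$. Prime avoidance (applicable since the $\mathfrak{p}_i$ are finitely many primes) then yields an element $a \in \mathfrak{q}$ lying outside $\mathfrak{p}_1 \cup \cdots \cup \mathfrak{p}_n$. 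This $a$ is automatically nonzero, as $0$ lies in every $\mathfrak{p}_i$, and a nonunit, as it lies in the proper ideal $\mathfrak{q}$; this contradicts the conclusion of the previous paragraph, completing the argument.

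The ``main obstacle'' is really just aligning the two standard inputs rather than any genuine difficulty: one must check that prime avoidance applies to the ideal $\mathfrak{q}$ and that the element it produces is truly a nonzero nonunit, so that the principal-ideal-theorem step bites. It is worth noting exactly where each hypothesis enters: Noetherianity is used to control the minimal primes over $(a)$ via the Hauptidealsatz, while $\dim R \geq 2$ is used solely to supply the height-two prime $\mathfrak{q}$. The domain hypothesis is what upgrades ``height at most one'' to ``height exactly one,'' which is essential for identifying the minimal prime over $(a)$ with one of the $\mathfrak{p}_i$.
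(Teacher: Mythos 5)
Your argument is correct: the Hauptidealsatz step (every nonzero nonunit of a Noetherian domain lies in a height-one prime), the choice of a prime $\mathfrak{q}$ of height $\geq 2$, and the prime-avoidance step are all applied validly, and the contradiction is genuine. The paper itself gives no proof of this statement, citing it as Theorem 144 of Kaplansky's \emph{Commutative Rings}; your proof is essentially the standard argument given there, so there is nothing to add.
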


We now prove Proposition~\ref{prop:V-topology}.
We let $R_\pfrak$ be the localization of a ring $R$ at a prime ideal $\pfrak$.

\begin{proof}
The second claim follows from the first as a one-dimensional regular local ring is a DVR.
We prove the first claim.
Applying Fact~\ref{fact:2-prime} fix height one prime ideals $\pfrak\ne\pfrak^*$ in $R$.
Then $R_\pfrak, R_{\pfrak^*}$ are one-dimensional Noetherian domains.
Let $v \colon R \setminus \{0\} \to \Zz$ be given by declaring $v(a)$ to be the length of the $R_\pfrak$-module $R_\pfrak/a R_\pfrak$.
Then $v$ extends to a map $K^\times \to \Zz$ by declaring $v(a/b) = v(a) - v(b)$.
We likewise define $v^* \colon K^\times \to \Zz$ using $\pfrak^*$.
By \cite[Definition~A.3]{Fulton-intersection} $v,v^*$ are well-defined homomorphisms.
Note that $v,v^*$ are non-negative on $R$ as the length is non-negative.
As $\pfrak,\pfrak^*$ are distinct height one prime ideals neither is contained in the other.
Fix $\alpha \in \pfrak \setminus \pfrak^*$, $\alpha^* \in \pfrak^* \setminus \pfrak$, and set $\beta = \alpha^*/\alpha$.
Note that $v(\beta) < 0 < v^*(\beta)$.
Apply Lemma~\ref{lem:2-val}.
\end{proof}

The valuational approach to $L((t,t^*))$ generalizes to the regular case.
Suppose that $R$ is a regular local ring of dimension $\ge 2$.
Let $\pfrak,\pfrak^*$ be distinct height one prime ideals of $R$.
A localization of a regular ring is regular~\cite[Corollary 19.14]{Eisenbud} so $R_\pfrak, R_{\pfrak^*}$ are one-dimensional regular local rings, hence DVR's.
Note that the induced discrete valuations on $K$ satisfy Lemma~\ref{lem:2-val}.

\section{Proof of Theorem~1.2.1}
\label{section:proof-1}
\textbf{In this section we suppose that $R$ is a Henselian local domain with maximal ideal $\mfrak$ and fraction field $K$.}
We will need the following variant of Hensel's lemma.

\begin{fact}
\label{fact:hensel-ish}
Suppose that $g \in R[x]$ and $\alpha \in R$ satisfy $g'(\alpha) \ne 0$ and $g(\alpha) \equiv 0 \pmod{g'(\alpha)^2 \mfrak}$.
Then there is $\alpha^* \in R$ such that $g(\alpha^*) = 0$ and $\alpha^* - \alpha \in (g(\alpha)/g'(\alpha))R$.
\end{fact}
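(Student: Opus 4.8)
The plan is to reduce Fact~\ref{fact:hensel-ish} to the ordinary Henselian property of $R$ by a change of variables that absorbs the factor $g'(\alpha)^2$. Write $c = g'(\alpha)$ and substitute $x = \alpha + cy$. Expanding $g$ around $\alpha$ gives $g(\alpha + cy) = \sum_{i \ge 0} a_i (cy)^i$, where the Taylor coefficients $a_i \in R$ are integer-coefficient combinations of the coefficients of $g$ and powers of $\alpha$ (so no division by factorials is needed and no characteristic hypothesis arises), with $a_0 = g(\alpha)$ and $a_1 = c$. Hence $g(\alpha + cy) = g(\alpha) + c^2 y + \sum_{i \ge 2} a_i c^i y^i$.

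First I would divide through by $c^2$ (legitimately, working in $K$, but landing back in $R[y]$). The hypothesis gives $g(\alpha) = c^2 m$ with $m \in \mfrak$, and for $i \ge 2$ we have $c^{i-2} \in R$, so
\[
h(y) := c^{-2} g(\alpha + cy) = m + y + \sum_{i \ge 2} a_i c^{i-2} y^i
\]
lies in $R[y]$. Now $h(0) = m \in \mfrak$ while $h'(0) = 1$ is a unit, so $h(0) \equiv 0$ and $h'(0) \not\equiv 0 \pmod{\mfrak}$. Applying the Henselian property of $R$ to $h$ and the approximate root $0$ produces $y^* \in R$ with $h(y^*) = 0$ and $y^* \equiv 0 \pmod{\mfrak}$. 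Setting $\alpha^* = \alpha + c y^*$ then yields $g(\alpha^*) = c^2 h(y^*) = 0$.

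The step requiring the most care is the sharper conclusion $\alpha^* - \alpha \in (g(\alpha)/g'(\alpha))R$, since the bare Henselian property only delivers $y^* \in \mfrak$, i.e.\ $\alpha^* - \alpha \in c\mfrak$. To upgrade this, I would rearrange $h(y^*) = 0$ as
\[
y^*\Bigl(1 + \sum_{i \ge 2} a_i c^{i-2} (y^*)^{i-1}\Bigr) = -m.
\]
Each summand in the parenthesized factor contains $(y^*)^{i-1}$ with $i - 1 \ge 1$, hence lies in $\mfrak$; since $R$ is local, the factor $1 + (\text{element of } \mfrak)$ is a unit $u$. Therefore $y^* = -m u^{-1} \in mR$, and consequently
\[
\alpha^* - \alpha = c y^* \in cmR = (g(\alpha)/g'(\alpha))R,
\]
using $cm = g'(\alpha)^2 m / g'(\alpha) = g(\alpha)/g'(\alpha)$. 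This completes the argument; uniqueness of $\alpha^*$ is not needed.
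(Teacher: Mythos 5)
Your proof is correct and follows essentially the same strategy as the paper's: a linear change of variables that absorbs the factor $g'(\alpha)^2$ reduces the statement to the ordinary Henselian property of $R$. The only difference is cosmetic: the paper substitutes $x = \alpha + (g(\alpha)/g'(\alpha))(w-1)$ and divides by $g(\alpha)$, so that the containment $\alpha^* - \alpha \in (g(\alpha)/g'(\alpha))R$ is immediate from $w \in R$, whereas your substitution $x = \alpha + g'(\alpha)y$ requires the extra (and correct) unit-factorization step at the end to upgrade $y^* \in \mfrak$ to $y^* \in (g(\alpha)/g'(\alpha)^2)R$.
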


We let $\overline{a} \in R/\mfrak$, $\overline{p}(y) \in (R/\mfrak)[y]$ be the reduction mod $\mfrak$ of an $a \in R$, $p \in R[y]$, respectively.

\begin{proof}
 After possibly replacing $g(x)$ with $g(x + \alpha)$ we suppose $\alpha = 0$.
Fix $c_0,\ldots,c_n \in R$ with $g(x) = c_0 + c_1 x + \ldots + c_n x^n$.
Then $c_1 = g'(0) \ne 0$ and $c_0/c^2_1 = g(\alpha)/g'(\alpha)^2 \in \mfrak$.
Let $p \in R[x]$ be given by
\begin{equation*}
p(x) = x + \sum_{i = 2}^n c_i c_1^{i-2} \left(\frac{c_0}{c^2_1}\right)^{i - 1}\! (x-1)^i.
\end{equation*}
As $c_0/c^2_1 \in \mfrak$ we have $\overline{p}(x) = x$.
Hence $\overline{p}(0) = 0$ and $\overline{p'}(0) = 1$.
As $R$ is Henselian there is $\beta \in \mfrak$ such that $p(\beta) = 0$.
Then
\[
0 = p(\beta) = \beta + \sum_{i = 2}^{n} c_i c^{i - 2}_1 \left(\frac{c_0}{c^2_1}\right)^{i - 1}\! (\beta - 1)^i 
= 1 + (\beta - 1) + \sum_{i = 2}^{n} c_i\left( \frac{c^{i - 1}_0}{c^i_1}\right)(\beta - 1)^i.
\]
Hence
\begin{align*}
0 = c_0 p(\beta) &= c_0 + c_0(\beta - 1) + \sum_{i = 2}^{n} c_i\left( \frac{c_0}{c_1}\right)^i (\beta - 1)^i \\
&= c_0 + c_1 \left(\frac{c_0}{c_1}\right) (\beta - 1) + \sum_{i = 2}^{n} c_i \left( \frac{c_0}{c_1}\right)^i (\beta - 1)^i \\
&= \sum_{i = 0}^{n} c_i \left(\frac{c_0}{c_1}\right)^i (\beta - 1)^i
\end{align*}
Set $\alpha^* = (c_0/c_1)(\beta - 1)$.
Then $\alpha^* \in (c_0/c_1)R$ as $\beta \in R$.
Note that $g(\alpha^*) = 0$.
\end{proof}

We proceed with the proof of Theorem~\ref{thm:main}.1.
We recall the notion of a standard \'etale morphism; see \cite[Definition 3.5.38]{poonen-qpoints} for more details. A morphism $f:X\to Y$ between affine schemes is called \textbf{standard \'etale} if the corresponding map in the category of rings is of the form, $R\to R[x]_g/(f)$ where $g,f\in R[x]$ and $f$ is monic with $f'$ invertible in $R[x]_g/(f)$.
Given an affine $K$-variety $V$, a \textbf{standard \'etale image} in $V(K)$ is a set of the form $f(X(K))$ for a standard \'etale morphism $f \colon X \to V$ of $K$-varieties.  If $V$ is affine, then every \'etale image is a union of standard \'etale images.  This holds because for any \'etale morphism $f \colon X \to V$, we can cover $X$ by affine opens $U \subseteq X$ such that $U \to V$ is standard \'etale \cite[Lemma 02GT]{stacks-project}.

We show that the $R$-adic topology on $V(K)$ refines the $\Sa E_K$-topology for any $K$-variety $V$.
By Fact~\ref{fact:refine} it suffices to fix $n$ and show that the $R$-adic topology on $K^n$ refines the $\Sa E_K$-topology.
It suffices to fix an $\Sa E_K$-open neighbourhood $O \subseteq K^n$ of the origin and show that $0$ lies in the $R$-adic interior of $O$.
We may suppose that $O$ is a standard \'etale image.
By the definition of a standard \'etale morphism there are $f, g \in K[x_1,\ldots,x_n,y]$ such that:
\begin{enumerate}
\item $O = \{ \alpha \in K^n : (\exists \beta \in K) f(\alpha,\beta) = 0 \ne g(\alpha,\beta) \} $, and
\item $\frac{\partial f}{\partial y} (\alpha,\beta) \ne 0$ for all $(\alpha,\beta) \in K^n \times K$ such that $f(\alpha,\beta) = 0 \ne g(\alpha,\beta)$.
\end{enumerate}
Fix $b \in K$ such that $f(0,b) = 0 \ne g(0,b)$.
Replacing $y$ with $y+ b$, we may suppose $b = 0$, so that $f(0,0) = 0 \ne g(0,0)$.
After clearing denominators we may suppose $f \in R[x_1,\ldots,x_n,y]$.
This implies that $\frac{\partial f}{\partial y} \in R[x_1,\ldots,x_n]$, hence $\frac{\partial f}{\partial y}(0,0) \in R$.
Because $g(x,y)$ is $R$-adically continuous and $g(0,0) \ne 0$, there is some $R$-adic neighbourhood $U$ of 0 in $K$ such that $g(x,y) \ne 0$ whenever $(x,y) \in U^n \times U$.
Because $R$ is $R$-adically bounded, there is an $R$-adic neighbourhood $U^*$ of 0 such that $U^* \cdot R \subseteq U$.
For each $a \in K^n$ we let $f_a \in K[y]$ be given by $f_a(y) = f(a,y)$.
Note that $f'_a(y) = \frac{\partial  f}{\partial y} (a,y)$.
Thus $f_0 \in R[y]$, $f_0(0) = 0$, and $f'_0(0) \ne 0$.
By continuity there is an $R$-adic neighbourhood $U'$ of $0$ in $K^n$ such that $U' \subseteq U^n$ and if $a \in U'$ then:
\begin{enumerate}
\item $f_a \in R[y]$,
\item $f'_a(0) \ne 0$,
\item $f_a(0)/f'_a(0)^2 \in \mfrak$,
\item $f_a(0)/f'_a(0) \in U^*$.
\end{enumerate}
We show that $U' \subseteq O$.
Fix $\alpha \in U'$.
Then we have $f'_\alpha(0) \ne 0$ and $f_\alpha(0) \equiv 0 \pmod{f'_\alpha(0)^2\mfrak}$.
By Fact~\ref{fact:hensel-ish} there is $\beta \in (f_\alpha(0)/f'_\alpha(0))R$ such that $f_\alpha(\beta) = 0$.
By (4) we have $\beta \in U^* \cdot R$.
Hence $\beta \in U$, so $(\alpha,\beta) \in U^n \times U$, and $g(\alpha,\beta) \ne 0$.
Note $f(\alpha,\beta) = 0$, so $\beta$ witnesses $\alpha \in O$.

\section{Proof of Theorem~1.2.2}
\label{section:proof-3}
\textbf{In this section we suppose that $R$ is a regular local ring with fraction field $K$.}
We first prove several algebraic lemmas.
Lemma~\ref{lem:regular-lem} is presumably unoriginal.

\begin{lemma}
\label{lem:regular-lem}
Suppose that $S$ is the Henselization of $R$.
Then $S \cap K = R$.
\end{lemma}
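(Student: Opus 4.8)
The plan is to prove the nontrivial inclusion $S \cap K \subseteq R$ by reducing it to the assertion that extending a principal ideal to $S$ and contracting it back recovers the ideal, i.e.\ $bS \cap R = bR$ for every $b \in R \setminus \{0\}$. Granting this, I would fix $x \in S \cap K$ and write $x = a/b$ with $a,b \in R$ and $b \ne 0$; then $a = bx \in bS$, and since also $a \in R$ we get $a \in bS \cap R = bR$, so $a = br$ for some $r \in R$ and hence $x = r \in R$. As the inclusion $R \subseteq S \cap K$ is immediate, this finishes the proof. Two points must be settled for the argument to make sense and to go through: that $S$ is a domain, so that $K = \Frac(R)$ embeds into $\Frac(S)$ and the intersection $S \cap K$ is meaningful; and the contraction identity $bS \cap R = bR$.

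First I would record that $S$ is a domain. Since $R$ is regular and Henselization does not change the completion, $\hat{S} \cong \hat{R}$, and a Noetherian local ring is regular if and only if its completion is regular; hence $\hat{S}$, and therefore $S$, is regular, and in particular $S$ is an integral domain \cite{stacks-project}. Thus $\Frac(S)$ is defined and the canonical map $R \to S$ extends to an embedding $K \hookrightarrow \Frac(S)$, inside which the intersection $S \cap K$ is formed.

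For the contraction identity I would use that the Henselization $R \to S$ is faithfully flat: it is a filtered colimit of \'etale, hence flat, $R$-algebras, and a flat local homomorphism of local rings is automatically faithfully flat \cite{stacks-project}. For any faithfully flat ring map $R \to S$ and any ideal $I \subseteq R$ one has $IS \cap R = I$; this follows by tensoring the quotient map with $S$ and using that $R/I \to (R/I) \otimes_R S = S/IS$ is injective, whence $IS \cap R = \ker(R \to S/IS) = \ker(R \to R/I) = I$. Taking $I = bR$ yields $bS \cap R = bR$, which completes the argument.

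The computation is short once these two standard inputs are in place, so I do not anticipate a genuine obstacle. The only point that genuinely requires the hypothesis of regularity, rather than mere faithful flatness, is the well-definedness of $S \cap K$, which is why I would establish that $S$ is a domain (via regularity together with invariance of the completion under Henselization) before running the main cancellation.
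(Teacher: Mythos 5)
Your proof is correct and follows essentially the same route as the paper: faithful flatness of the Henselization $R \to S$ plus the contraction identity $bS \cap R = bR$, applied to $a/b \in S \cap K$. The only differences are cosmetic — the paper cites Matsumura for the contraction identity where you prove it directly, and you add the (reasonable, though omitted in the paper) verification that $S$ is a domain so that $S \cap K$ makes sense.
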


\begin{proof}
The Henselization $S$ is faithfully flat over $R$~\cite[Lemma~07QM]{stacks-project}.  For $a,b\in R$ with $a/b\in S$ (i.e. $a\in bS$), $bS\cap R=bR$ by faithful flatness~\cite[Chapter 2, (4.C)(ii)]{matsumura}. Hence $a\in bR$ and $a/b\in R$.
\end{proof}

\begin{lemma}
\label{lem:JL}
Suppose that $\dim R \ge 2$.
\begin{enumerate}
\item If $\Chara(R/\mfrak) \ne 2$ and $\alpha,\beta \in K$ satisfy $1 + \alpha^4 = \beta^2$ then $\alpha \in R$ or $1/\alpha \in R$.
\item If $\Chara(R/\mfrak) = 2$ and $\alpha,\beta \in K$ satisfy $1 + \alpha^3 = \beta^3$ then $\alpha \in R$ or $1/\alpha \in R$.
\end{enumerate}
\end{lemma}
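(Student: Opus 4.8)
The plan is to exploit that a regular local ring is a UFD (Auslander--Buchsbaum) and to reduce each statement to the impossibility of a nontrivial Fermat-type identity, which I would then kill by infinite descent.

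First I would argue by contradiction: suppose $\alpha \notin R$ and $1/\alpha \notin R$. Writing $\alpha = f/g$ in lowest terms in the UFD $R$, the elements $f,g$ are coprime, and $\alpha \notin R$ (resp.\ $1/\alpha \notin R$) forces $g$ (resp.\ $f$) to be a non-unit. Clearing denominators in $1 + \alpha^4 = \beta^2$ and using uniqueness of reduced fractions up to units, I would obtain $f^4 + g^4 = u h^2$ for some unit $u$ and some $h \in R$; in the characteristic-$2$ case the relation $1 + \alpha^3 = \beta^3$ similarly yields $f^3 + g^3 = u h^3$. Thus everything reduces to showing that, for coprime non-units $f,g$, neither $f^4 + g^4$ nor $f^3 + g^3$ can be a square (resp.\ cube) in $K$ up to a unit. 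This is exactly where $\dim R \ge 2$ enters: in a one-dimensional regular local ring (a DVR) coprime elements cannot both be non-units, so there is nothing to prove.

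Before descending I would clean up the ambient ring. Since $f,g$ are coprime, the ideal $(f,g)$ has height $2$, so I may localize $R$ at a height-two prime containing it and assume $\dim R = 2$, and then pass to the completion $\hat R$; completion preserves the relation, keeps $f,g$ non-units, and---because $(f,g)$ stays $\mfrak$-primary---keeps them coprime, which is all I need since I am proving a non-existence statement. By the Cohen structure theorem $\hat R$ is a power series ring over a field (or over a complete DVR in the mixed-characteristic case), and I may enlarge the residue field to its algebraic closure without disturbing coprimality or the non-unit hypothesis. After this, the residue characteristic being $\neq 2$ (resp.\ $=2$) guarantees that $2$ (resp.\ $3$) is a unit, that a primitive fourth (resp.\ third) root of unity is available, and that every unit is a square (resp.\ cube).

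The heart of the proof, and the step I expect to be the main obstacle, is a Fermat-style infinite descent in this complete regular local UFD. In the quartic case I would factor $f^4 + g^4 = (f^2 + \sqrt{-1}\,g^2)(f^2 - \sqrt{-1}\,g^2)$ into coprime factors whose product is a unit times a square, conclude from unique factorization that each factor is itself a square, and run the classical Pythagorean descent to produce a new solution $(a,b)$ with $ab$ an associate of $g$; measuring by $\operatorname{ord}_{\mfrak}(fg)$, this is strictly smaller, or else collapses immediately to the contradiction that one of $f,g$ is a unit, and the well-ordering of $\Zz_{\ge 0}$ finishes. In the cubic case I would instead factor $f^3 + g^3 = \prod_{j}(f + \omega^j g)$ into three pairwise coprime factors, each therefore a unit times a cube, and eliminate $f,g$ from the resulting linear system to obtain a strictly smaller instance of the same equation. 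The delicate points throughout are the bookkeeping of the unit twists---which is precisely what forces the passage to an algebraically closed residue field so that units become perfect powers---and verifying that the descent measure genuinely decreases rather than stalls; the characteristic-$2$ cubic variant will require separate care, since the auxiliary curve degenerates there.
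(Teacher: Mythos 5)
Your opening reductions are fine (writing $\alpha=f/g$ in lowest terms in the UFD $R$, obtaining $f^4+g^4=uh^2$ with $f,g$ coprime non-units, localizing at a height-two minimal prime of $(f,g)$, completing, and enlarging the residue field all check out, modulo some hand-waving about the Cohen structure theorem in the ramified mixed-characteristic case). But the step you yourself flag as ``the main obstacle'' is in fact the entire content of the lemma, and it is not carried out. The classical descents for $x^4+y^4=z^2$ and $x^3+y^3=z^3$ are not portable to this setting: they run on arithmetic specific to $\Zz$ and $\Zz[\omega]$ (positivity, parity, and congruences modulo the ramified primes above $2$ and $3$), and precisely those tools evaporate here. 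In case (1) the residue characteristic is merely $\ne 2$, and in case (2) it is $2$, so $2$ (resp.\ $1-\omega$) is a unit and the mod-$\lambda^k$ bookkeeping that powers Euler's and Fermat's arguments has no analogue. Factoring $f^4+g^4$ into four pairwise coprime factors, each a square, leaves you with four squares in an arithmetic-progression-type configuration with non-unit common difference --- a genus-one descent that must be set up from scratch in the ring $\hat R$, with a termination measure and, crucially, a proof that the new solution again has \emph{both} components non-units (the analogue of $xyz\ne 0$). Your parenthetical ``or else collapses immediately to the contradiction that one of $f,g$ is a unit'' asserts exactly the point that needs proof. As written, the argument reduces the lemma to a harder-looking open sub-lemma rather than proving it.

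For comparison, the paper takes a much shorter route: it does not prove the Diophantine statement at all, but reduces to the case where $R$ is Henselian --- passing to the Henselization $S$, noting $S$ is regular of the same dimension with the same residue field, and invoking Lemma~\ref{lem:regular-lem} ($S\cap K=R$) to pull the conclusion back --- and then cites the Henselian case from Jensen and Lenzing's book. If you want a self-contained proof along your lines, the thing to do is either locate and adapt the Jensen--Lenzing argument (which exploits Henselianity rather than unique factorization descent) or genuinely execute the descent in $k[[s,t]]$ and in the mixed-characteristic models, which is a substantial project in its own right.
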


\begin{proof}
We reduce to the special case where $R$ is Henselian, which is proven in \cite[pg 52, 55]{model-theoretic-algebra}.
We treat (1); (2) follows in the same manner.
Suppose that $\Chara(R/\mfrak) \ne 2$ and $\alpha,\beta \in K$ satisfy $1 + \alpha^4 = \beta^2$.
Let $S$ be the Henselization of $R$.
Then $S$ is a regular local ring of the same dimension as $R$ by~\cite[Lemmas~06LJ, 06LK, 06LN]{stacks-project}.
The maximal ideal of $S$ is $\mfrak S$ and $S/\mfrak S = R/\mfrak$~\cite[Lemma~07QM]{stacks-project}.
Hence $\Chara(S/\mfrak S) \ne 2$.
By the Henselian case,
$\alpha \in S$ or $1/\alpha \in S$.
An application of Lemma~\ref{lem:regular-lem} shows that $\alpha \in R$ or $1/\alpha \in R$.
\end{proof}

We proceed with the proof of Theorem~\ref{thm:main}.2.
If $R$ is one dimensional then $K$ is a discretely valued field; this case follows by Fact~\ref{fact:canonical}.5.
Hence we suppose that $\dim R\ge 2$.
By Lemma~\ref{lem:etale-adic} it is enough to produce an $\Sa E_K$-neighbourhood of $0$ contained in $R$.
First suppose $\Chara(R/\mfrak) \ne 2$.
This implies that $\Chara(K) \ne 2$.
Let $S$ be $\{ b^2 : b \in K^\times \}$, $f \colon K \to K$ be given by $f(x) = 1 + x^4$, and $\Omega = f^{-1}(S)$.
Note $0 \in \Omega$.
By Fact~\ref{fact:basic}, $S$ and $\Omega$ are $\Sa E_K$-open sets.
By Lemma~\ref{lem:JL}, $\Omega \subseteq R \cup R^{-1}$.
Fix a height one prime ideal $\pfrak$ of $R$.
The localization $R_\pfrak$ is a regular local ring of dimension~1, hence a discrete valuation ring.
Let $v \colon K^\times \to \Zz$ be the associated valuation.
By Fact~\ref{fact:canonical}.5, the $\Sa E_K$-topology on $K$ refines the $v$-topology. 
Therefore the valuation ideal $\pfrak R_\pfrak = \{a \in K \colon v(a) > 0\}$ is $\Sa E_K$-open.
The intersection $\Omega \cap \pfrak R_\pfrak$ is an $\Sa E_K$-open neighbourhood of 0.
We show that $\Omega \cap \pfrak R_\pfrak \subseteq R$. Suppose $a \in \Omega\cap \pfrak R_\pfrak$.
As $a \in \pfrak R_\pfrak$ we have $v(a) > 0$, hence $v(a^{-1}) < 0$, hence $a^{-1} \notin R_\pfrak$, so $a^{-1} \notin R$.
As $a \in \Omega$, we have $a \in R$.

The case when $\Chara(R/\mfrak) = 2$ is similar, using $S = \{b^3 : b \in K^\times\}$ and $f(a) = 1 + a^3$.

\section{Proof of Theorem~1.3}
Fix a prime $p$, let $\res \colon \Zz_p \to \Zz/p\Zz$ be the residue map, and $v \colon \Qq^\times_p \to \Zz$ be the $p$-adic valuation.
Fix a $\Qq$-linear derivation $\der\colon\Qq_p \to \Qq_p$ that is not identically zero. 
We can construct $\der$ in the following fashion: Fix $t \in \Qq_p$ transcendental over $\Qq$, let $\der^*$ be the unique $\Qq$-linear derivation $\Qq(t) \to \Qq(t)$ such that $\der^* t = 1$, and $\der$ be a $\Qq$-linear derivation $\Qq_p \to \Qq_p$ extending $\der^*$.
Such extensions exist by \cite[Chapter~1 \textsection 5]{weil-fga}.
By $\Qq$-linearity $\der r=0$ for all $r\in \Qq$.

We define $E$ to be $\{a \in \Zz_p : \der a \in \Zz_p\}$.
It is easy to see that $E$ is a subring of $\Zz_p$.
In this section we show that $E$ is a one-dimensional Noetherian Henselian local ring, $\Qq_p$ is the fraction field of $E$, and the $E$-adic topology on $\Qq_p$ strictly refines the $p$-adic topology.
We also show that $E$ has non-reduced completion, so $E$ is neither regular nor excellent.

\begin{lemma}
\label{lem:density}
The graph $\{(a,\der a) : a \in \Qq_p\}$ of $\der$ is $p$-adically dense in $\Qq_p^2$.
\end{lemma}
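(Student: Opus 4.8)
The plan is to exhibit an explicit dense subset of the graph built from the single transcendental element $t$. Recall from the construction that $\der t = 1$ and that $\der$ vanishes on $\Qq$. Consequently, for any $q_0, q_1 \in \Qq$ the element $a = q_0 + q_1 t$ satisfies $\der a = q_1 \der t = q_1$ by $\Qq$-linearity, so the graph of $\der$ contains every pair $(q_0 + q_1 t,\, q_1)$ with $q_0, q_1 \in \Qq$. It therefore suffices to show that this collection of pairs is already $p$-adically dense in $\Qq_p^2$.

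To this end I would fix a target point $(u, w) \in \Qq_p^2$ and an integer $N \in \Nn$, and produce $q_0, q_1 \in \Qq$ with $v(q_0 + q_1 t - u) \ge N$ and $v(q_1 - w) \ge N$, which places $(q_0 + q_1 t, q_1)$ in the basic neighbourhood of $(u,w)$ determined by $N$. The two approximations are carried out sequentially. First, using that $\Qq$ is dense in $\Qq_p$, choose $q_1 \in \Qq$ with $v(q_1 - w) \ge N$; this controls the second coordinate, independently of $q_0$. With $q_1$ now fixed, the element $u - q_1 t$ is a well-defined element of $\Qq_p$, so again by density of $\Qq$ I can choose $q_0 \in \Qq$ with $v(q_0 - (u - q_1 t)) \ge N$, which gives $v(q_0 + q_1 t - u) \ge N$ and controls the first coordinate.

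Since the $p$-adic topology on $\Qq_p^2$ is the product topology, matching both coordinates to within valuation $\ge N$ for every $N$ shows that the pairs $(q_0 + q_1 t, q_1)$, and a fortiori the graph of $\der$, are dense in $\Qq_p^2$. The only thing to watch is the order of the two approximations: one must pin down $q_1$ first, because the choice of $q_0$ depends on the now-fixed value $u - q_1 t$. I do not anticipate any genuine obstacle here; the content of the lemma is simply that the relation $\der t = 1$, together with the density of $\Qq$ in $\Qq_p$, suffices to reach every point, which reflects the wildness of the $\Qq$-linear derivation $\der$.
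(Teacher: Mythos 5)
Your proof is correct and is essentially the paper's argument: both exploit that the graph contains every pair $(q_0+q_1t,\,q_1\der t)$ with $q_0,q_1\in\Qq$ and that $\Qq^2$ is dense in $\Qq_p^2$; the paper merely packages your two-step approximation as the image of $\Qq^2$ under the affine homeomorphism $(x,y)\mapsto(x+yt,(\der t)y)$. The only cosmetic difference is that you invoke $\der t=1$ from the explicit construction, while the paper's proof works for any non-zero $\Qq$-linear derivation by fixing some $t$ with $\der t\neq 0$.
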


\begin{proof}
Fix $t \in \Qq_p$ with $\der t \ne 0$.
Recall that $\Qq^2$ is dense in $\Qq_p^2$.
Let $f \colon \Qq^2_p \to \Qq^2_p$ be the affine transformation $f(x,y) = (x + y t, (\der t) y)$.
Then $f$ is a homeomorphism as $\der t \ne 0$, hence $f(\Qq^2)$ is $p$-adically dense in $\Qq^2_p$.
We have $\der(a + bt) = \der(a) + b \der(t) = b\der(t)$ for all $a,b \in \Qq$, hence
\begin{equation*}
f(\Qq^2) =
 \{(a+bt, \der(a + bt)) : a, b \in \Qq\} 
    \subseteq \{(c,\der c) : c \in \Qq_p\}. \qedhere
\end{equation*}
\end{proof}

\begin{lemma}
\label{lem:fraction-field}
The fraction field of $E$ is $\Qq_p$.
\end{lemma}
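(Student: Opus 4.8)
The plan is to prove the two inclusions of $\Frac(E) = \Qq_p$ separately. Since $E \subseteq \Zz_p \subseteq \Qq_p$, the inclusion $\Frac(E) \subseteq \Qq_p$ is immediate, so the entire content lies in the reverse inclusion: I must show that every $x \in \Qq_p$ can be written as a quotient of two elements of $E$. The first thing I would record is a supply of denominators: since $\der$ annihilates $\Qq$, every element of $\Qq \cap \Zz_p = \Zz_{(p)}$ lies in $E$, and in particular $p^m \in E$ for all $m \ge 0$. These are denominators whose only effect under $\der$ is that of a $\Qq$-scalar.

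The main step is to clear the denominator of an arbitrary $x \in \Qq_p$ by a single power of $p$. Because $\der x \in \Qq_p$ and $\Qq_p = \bigcup_{m \ge 0} p^{-m}\Zz_p$, I would choose $m$ large enough that \emph{simultaneously} $p^m x \in \Zz_p$ and $p^m \der x \in \Zz_p$. By $\Qq$-linearity of $\der$ (with $p^m \in \Qq$) we have $\der(p^m x) = p^m \der x \in \Zz_p$, so $p^m x \in \Zz_p$ has image under $\der$ in $\Zz_p$, i.e. $p^m x \in E$. Combined with $p^m \in E$, this exhibits $x = (p^m x)/p^m \in \Frac(E)$, which finishes the reverse inclusion.

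The delicate point — and the reason the statement is not completely formal — is that $\der$ is only assumed to be a $\Qq$-linear derivation, with no continuity or $\Zz_p$-linearity, so we have essentially no control over the value $\der x$ for $x \notin \Qq$. The single feature we can exploit is that $\der x$ nonetheless lands in $\Qq_p$ and hence has bounded-below valuation; this is precisely what lets one power of $p$ pull both $x$ and $\der x$ into $\Zz_p$ at once. It is also essential that the clearing factor be taken from $\Qq$, so that it passes through $\der$ as a scalar and contributes no spurious $(\der p^m)\,x$ term. I do not expect this lemma to require the density statement of Lemma~\ref{lem:density}, which should instead be needed for the comparison of topologies rather than for identifying the fraction field.
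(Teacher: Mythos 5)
Your proof is correct, and it is genuinely simpler than the one in the paper. The paper reduces to showing $\Zz_p \subseteq \Frac(E)$ and then, for a fixed $a \in \Zz_p$, invokes the density of the graph of $\der$ (Lemma~\ref{lem:density}) to manufacture a denominator $b$ with $v(b) = v(\der b) = \gamma$ for $\gamma$ large, checking by the Leibniz rule that both $b$ and $ab$ land in $E$. You instead observe that $\der$ annihilates $\Qq$, so the powers $p^m$ are already elements of $E$ that pass through $\der$ as scalars; choosing $m$ large enough to put both $p^m x$ and $p^m \der x$ into $\Zz_p$ then gives $p^m x \in E$ directly, and $x = (p^m x)/p^m$. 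This bypasses Lemma~\ref{lem:density} entirely (the paper's choice of $b$ could in fact have been specialized to $b = p^\gamma$, with $\der b = 0$, which is essentially your argument). The only thing the paper's route buys is a uniform template in which the denominator is produced by the density lemma, which is the tool used repeatedly elsewhere in that section; your version isolates the fact that identifying the fraction field needs nothing beyond $\Qq$-linearity of $\der$ and the local boundedness of valuations in $\Qq_p$.
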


\begin{proof}
It is enough to show that $\Zz_p$ is contained in the fraction field of $E$.
Fix $a \in \Zz_p$.
We produce $b \in \Zz_p$ such that $b,ab \in E$.
Take $\gamma \in \Nn$ such that $\gamma + v(\der a) \ge 0$.
By Lemma~\ref{lem:density} there is $b \in \Qq_p$ satisfying $v(b) = v(\der b) = \gamma$; in particular $b \ne 0$.
Then $b, \der b \in \Zz_p$ as $\gamma \ge 0$, hence $b \in E$.
As $a,b \in \Zz_p$ we have $ab \in \Zz_p$, so it remains to show that $\der(ab) \in \Zz_p$.
We have 
\begin{align*}
v(\der(b)a) \hspace{1.5pt}&= \hspace{1.5pt} v(\der b) + v(a) \hspace{1.5 pt} = \hspace{1.5pt} \gamma + v(a) \hspace{1.5pt}\ge \hspace{1.5pt} 0 \\
v(\der(a)b) &= v(\der a) + v(b) = v(\der a) + \gamma \ge 0.
\end{align*}
Hence $a\der(b), b\der(a) \in \Zz_p$, so $\der(ab) = a\der(b) + b\der(a)$ is in $\Zz_p$.
\end{proof}

\begin{lemma}
\label{lem:max-ideal}
$E$ is a local ring with maximal ideal $E \cap p\Zz_p = \{a \in E : \res(a) = 0 \}$.
\end{lemma}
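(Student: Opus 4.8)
The plan is to show that the set of non-units of $E$ is exactly $\mfrak := E \cap p\Zz_p$. A commutative ring is local precisely when its non-units form an ideal, and that ideal is then the unique maximal ideal; so once I verify that $\mfrak$ is a nonzero proper ideal and coincides with the non-units, the lemma follows. I would first note that the two descriptions of $\mfrak$ agree: the reduction map $\res \colon \Zz_p \to \Zz/p\Zz$ has kernel $p\Zz_p$, so for $a \in E \subseteq \Zz_p$ we have $a \in p\Zz_p$ if and only if $\res(a) = 0$, giving $E \cap p\Zz_p = \{a \in E : \res(a) = 0\}$.

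Next I would check that $\mfrak$ is a nonzero proper ideal of $E$. As the intersection of the ring $E$ with the ideal $p\Zz_p$ of $\Zz_p$, it is closed under addition and under multiplication by elements of $E \subseteq \Zz_p$, hence an ideal. It is proper since $\res(1) = 1 \ne 0$, and nonzero since $p \in \Qq$ gives $\der p = 0 \in \Zz_p$, whence $p \in E$, while $\res(p) = 0$ places $p \in \mfrak$; in particular $E$ is not a field, consistent with the convention that a local ring is not a field.

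The substantive step is to show that every $a \in E \setminus \mfrak$ is a unit of $E$. Such an $a$ has $\res(a) \ne 0$, i.e.\ $v(a) = 0$, so $a \in \Zz_p^\times$ and $a^{-1} \in \Zz_p$; it remains only to see that $\der(a^{-1}) \in \Zz_p$. Applying $\der$ to $a \cdot a^{-1} = 1$ and using $\der(1) = 0$ together with the Leibniz rule gives $\der(a^{-1}) = -\der(a)/a^2$. Here $\der(a) \in \Zz_p$ because $a \in E$, and $v(a) = 0$ forces $v(a^2) = 0$, so $a^{-2} \in \Zz_p$; therefore $\der(a^{-1}) = -\der(a) \cdot a^{-2} \in \Zz_p$ and $a^{-1} \in E$. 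Conversely, any unit of $E$ is a unit of $\Zz_p$ and so lies outside $p\Zz_p$, hence outside $\mfrak$. Thus the non-units of $E$ are exactly $\mfrak$, as desired.

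I do not expect a serious obstacle; the one point needing care is the inverse computation, where it is essential that $v(a) = 0$ so that $a^{-2}$ again lands in $\Zz_p$ and the product $\der(a) \cdot a^{-2}$ remains integral --- and this is precisely what membership in $E \setminus \mfrak$ guarantees. The remaining steps are the routine verification that an intersection with an ideal is an ideal.
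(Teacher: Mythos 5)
Your proof is correct and follows essentially the same route as the paper's: reduce to showing every $a \in E$ with $\res(a) \ne 0$ is a unit, compute $\der(a^{-1}) = -\der(a)/a^2$, and use $v(a)=0$ to conclude $\der(a^{-1}) \in \Zz_p$. The extra checks you include (that the two descriptions of $\mfrak$ agree, and that $\mfrak \ne 0$ via $p \in E$) are fine but not needed beyond what the paper already records.
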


\begin{proof}
Let $\mfrak = E \cap p\Zz_p$.
Then $\mfrak$ is a proper ideal of $E$.
It suffices to fix $a \in E \setminus \mfrak$ and show that $a$ is invertible in $E$.
We have $a \in \Zz_p$ and $\res(a) \ne 0$, hence $v(a) = v(a^{-1}) = 0$.
Then
\[
v(\der(a^{-1})) = v( - \der(a)/a^2) = v(\der a) - 2v(a) = v(\der a) \ge 0.
\]
Therefore $a^{-1}, \der(a^{-1}) \in \Zz_p$, hence $a^{-1} \in E$.
\end{proof}

\begin{lemma}
\label{lem:adic-refinement}
Suppose that $R,R^*$ are local domains with the same fraction field $K$.
Then the $R$-adic topology on $K$ refines the $R^*$-adic topology if and only if $R^*$ is $R$-adically open.
If $R \subseteq R^*$ then the $R$-adic topology refines the $R^*$-adic topology.
\end{lemma}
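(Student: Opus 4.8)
The plan is to prove the stated equivalence first and then deduce the inclusion statement from it, since the inclusion statement is essentially a special case combined with the elementary observation that an additive subgroup of $K$ containing a nonempty open set is itself open.

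For the equivalence, the key input is that the $R$-adic topology is a field topology, so for each $\alpha \in K^\times$ and $\beta \in K$ the affine map $x \mapsto \alpha x + \beta$ is an $R$-adic homeomorphism of $K$ onto itself. I would also record that $R^*$ is a basic $R^*$-adic open set, being $1 \cdot R^* + 0$, hence is $R^*$-adically open. The forward direction is then immediate: if the $R$-adic topology refines the $R^*$-adic topology, every $R^*$-adically open set is $R$-adically open, and in particular so is $R^*$. For the converse, suppose $R^*$ is $R$-adically open. A basis for the $R^*$-adic topology is the collection of sets $\alpha R^* + \beta$ with $\alpha \in K^\times$ and $\beta \in K$, and each such set is the image of the $R$-adically open set $R^*$ under the $R$-adic homeomorphism $x \mapsto \alpha x + \beta$, hence is itself $R$-adically open. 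Since every basic $R^*$-adic open is $R$-adically open, the $R$-adic topology refines the $R^*$-adic topology.

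For the final claim, suppose $R \subseteq R^*$. By the equivalence just established it suffices to verify that $R^*$ is $R$-adically open. The ring $R$ is $R$-adically open by the definition of the $R$-adic topology, and because $R \subseteq R^*$ with $R^*$ closed under addition, the translate $x + R$ is contained in $R^*$ for every $x \in R^*$. Hence $R^* = \bigcup_{x \in R^*}(x + R)$ is a union of $R$-adic opens, so $R^*$ is $R$-adically open and the $R$-adic topology refines the $R^*$-adic topology.

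There is no serious obstacle in this argument; the only point requiring a little care is recognizing that the defining basis of the $R^*$-adic topology consists precisely of the affine images of $R^*$, so that the homeomorphism property of affine maps transfers the $R$-adic openness of $R^*$ to every basic $R^*$-adic open set.
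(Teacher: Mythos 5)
Your proof is correct and takes essentially the same route as the paper: the paper declares the equivalence ``easy and left to the reader'' (your argument via the affine homeomorphisms $x \mapsto \alpha x + \beta$ carrying the basic opens $\alpha R^* + \beta$ is the intended one), and it proves the second claim exactly as you do, by writing $R^* = \bigcup_{a \in R^*}(a + R)$ to conclude that $R^*$ is $R$-adically open.
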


\begin{proof}
The first claim is easy and left to the reader.
Suppose $R \subseteq R^*$.
Then $R$ is an additive subgroup of $R^*$, hence $R^* = \bigcup_{a \in R^*} (a + R)$, hence $R^*$ is $R$-adically open.
\end{proof}

\begin{proposition}
\label{prop:refine}
The $E$-adic topology on $\Qq_p$ strictly refines the $p$-adic topology.
\end{proposition}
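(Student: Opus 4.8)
The plan is to establish two things: that the $E$-adic topology refines the $p$-adic topology, and that this refinement is strict. The first part should follow immediately from the containment $E \subseteq \Zz_p$: since both $E$ and $\Zz_p$ are local domains with fraction field $\Qq_p$ (the former by Lemma~\ref{lem:fraction-field} and Lemma~\ref{lem:max-ideal}, the latter being the standard $p$-adic integers), Lemma~\ref{lem:adic-refinement} tells us that $E \subseteq \Zz_p$ implies the $E$-adic topology refines the $\Zz_p$-adic topology, which is exactly the $p$-adic topology. So the refinement direction is essentially free.

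The real content is \emph{strictness}. By Lemma~\ref{lem:adic-refinement}, the refinement is strict precisely when $\Zz_p$ is \emph{not} $E$-adically open, equivalently when $E$ is a strictly smaller ring than $\Zz_p$ in a way the $E$-adic topology detects. The key idea I would use is that the $E$-adic topology has $E$ as an open set but the $p$-adic topology does not: I want to exhibit an element of $\Zz_p$ that is not in the $E$-adic interior of $\Zz_p$, or more directly, show that no $E$-adic basic open set $\alpha E + \beta$ around a suitable point is contained in $\Zz_p$ while a $p$-adic neighborhood of that point is. The cleanest route is to show $E \subsetneq \Zz_p$ and that $\Zz_p \setminus E$ is dense in $\Zz_p$ in the $E$-adic topology, which forces $\Zz_p$ to fail to be $E$-adically open. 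To see $E \neq \Zz_p$, pick any $a \in \Zz_p$ with $\der a \notin \Zz_p$ (such $a$ exists because $\der$ is $\Qq$-linear and nonzero, so scaling an element with $\der a \neq 0$ by an appropriate power of $p$ forces $v(\der a) < 0$).

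The main obstacle, and the step requiring the most care, is verifying that $\Zz_p$ is not $E$-adically open. I would argue as follows: suppose toward a contradiction that $\Zz_p$ is $E$-adically open, so that $0$ has an $E$-adic neighborhood $\alpha E \subseteq \Zz_p$ for some $\alpha \in \Qq_p^\times$. The goal is to derive a contradiction by producing an element of $\alpha E$ lying outside $\Zz_p$, or by using Lemma~\ref{lem:density} to show that the graph condition defining $E$ is too loose to keep $\alpha E$ inside $\Zz_p$. Concretely, density of the graph of $\der$ (Lemma~\ref{lem:density}) lets me find elements $b$ with $v(b)$ prescribed but $v(\der b)$ very negative; scaling such a $b$ into $\alpha E$ should push the product outside $\Zz_p$ because membership in $E$ only constrains $\der$ on the factor coming from $E$, not on $\alpha$. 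The delicate point is bookkeeping the valuations of $\der(\alpha \cdot e)$ via the Leibniz rule to confirm that the $\der(\alpha) \cdot e$ term can be made to have negative valuation while $e \in E$, contradicting $\alpha e \in \Zz_p$.

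Assembling these, strictness follows: the $E$-adic topology strictly refines the $p$-adic topology exactly because $\Zz_p$, which is $p$-adically clopen and open, fails to be $E$-adically open. I expect the whole proof to be short, with Lemma~\ref{lem:density} doing the heavy lifting in the obstruction step, and Lemma~\ref{lem:adic-refinement} packaging both the refinement and the strictness criterion.
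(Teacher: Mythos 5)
The refinement half of your argument is correct and matches the paper: $E \subseteq \Zz_p$ plus Lemma~\ref{lem:adic-refinement} gives that the $E$-adic topology refines the $p$-adic topology. But your strictness criterion has the two rings swapped, and the statement you set out to prove is false. Applying the first claim of Lemma~\ref{lem:adic-refinement} with $R = \Zz_p$ and $R^* = E$, the $p$-adic topology refines the $E$-adic topology if and only if $E$ is \emph{$p$-adically} open; so strictness is equivalent to showing that $E$ is not $p$-adically open. Your target, ``$\Zz_p$ is not $E$-adically open,'' cannot be established: $\Zz_p$ \emph{is} $E$-adically open, since $E$ is an additive subgroup of $\Zz_p$ and hence $\Zz_p = \bigcup_{a \in \Zz_p}(a + E)$ is a union of basic $E$-adic opens. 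Indeed, $\Zz_p$ being $E$-adically open is exactly equivalent to the refinement you proved in the first half, so if your contradiction argument succeeded it would refute part one of the proposition. Concretely, your proposed contradiction hypothesis ``$\alpha E \subseteq \Zz_p$ for some $\alpha \in \Qq_p^\times$'' is satisfied trivially by $\alpha = 1$, so no contradiction is available; the Leibniz-rule bookkeeping you sketch cannot rescue this.

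The correct obstruction is the one you mention in passing but then abandon: $E$ is $E$-adically open (it is the basic open $1\cdot E + 0$) but has empty $p$-adic interior. This follows in one line from Lemma~\ref{lem:density}: any nonempty $p$-adically open $O \subseteq \Qq_p$ contains some $a$ with $\der a \notin \Zz_p$, hence $a \notin E$ and $O \not\subseteq E$. That is precisely the paper's argument. So the fix is to replace your entire ``main obstacle'' paragraph with this density argument aimed at $E$, not at $\Zz_p$.
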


\begin{proof}
We have $E \subseteq \Zz_p$, so the $E$-adic topology refines the $p$-adic topology by Lemma~\ref{lem:adic-refinement}.
We show that $E$ is not open in the $p$-adic topology.
Suppose $O$ is a nonempty $p$-adically open subset of $\Qq_p$.
By Lemma~\ref{lem:density} there is $a \in O$ such that $\der a \in \Qq_p \setminus \Zz_p$.
Then $a \notin E$, hence $O \not\subseteq E$.
\end{proof}

\begin{proposition}
\label{prop:example-hensel}
$E$ is Henselian.
\end{proposition}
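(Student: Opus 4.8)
The plan is to bootstrap Henselianity of $E$ from that of $\Zz_p$, using the derivation $\der$ to guarantee that the root produced by Hensel's lemma stays inside $E$. So I would fix $g \in E[x]$ and $\alpha \in E$ with $g(\alpha) \equiv 0 \pmod{\mfrak}$ and $g'(\alpha) \not\equiv 0 \pmod{\mfrak}$, and produce $\alpha^* \in E$ with $g(\alpha^*) = 0$ and $\alpha^* \equiv \alpha \pmod{\mfrak}$. By Lemma~\ref{lem:max-ideal} we have $\mfrak = E \cap p\Zz_p$, so these hypotheses say exactly that $g(\alpha) \in p\Zz_p$ and $g'(\alpha) \in \Zz_p^\times$. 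Since $E \subseteq \Zz_p$, I may regard $g$ as an element of $\Zz_p[x]$ and $\alpha$ as an element of $\Zz_p$; because $\Zz_p$ is Henselian there is a (unique) $\alpha^* \in \Zz_p$ with $g(\alpha^*) = 0$ and $\alpha^* \equiv \alpha \pmod{p\Zz_p}$. The real content is then to check $\alpha^* \in E$, i.e.\ $\der\alpha^* \in \Zz_p$.

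To do this I would differentiate the defining equation. Writing $g(x) = \sum_i c_i x^i$ with $c_i \in E$, set $g^{\der}(x) = \sum_i (\der c_i)\, x^i$; since each $\der c_i \in \Zz_p$ by definition of $E$, we get $g^{\der} \in \Zz_p[x]$. Applying $\der$ to the identity $0 = g(\alpha^*)$ and using $\Qq$-linearity together with the Leibniz rule yields $0 = g^{\der}(\alpha^*) + g'(\alpha^*)\,\der\alpha^*$, hence $\der\alpha^* = -\,g^{\der}(\alpha^*)/g'(\alpha^*)$. Here $g^{\der}(\alpha^*) \in \Zz_p$ because $g^{\der} \in \Zz_p[x]$ and $\alpha^* \in \Zz_p$, while $g'(\alpha^*)$ is a unit: indeed $\alpha^* \equiv \alpha \pmod{p\Zz_p}$ and $g' \in \Zz_p[x]$ force $g'(\alpha^*) \equiv g'(\alpha) \pmod{p\Zz_p}$, and $g'(\alpha) \in \Zz_p^\times$. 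Therefore $\der\alpha^* \in \Zz_p$, so $\alpha^* \in E$.

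It then only remains to note $\alpha^* \equiv \alpha \pmod{\mfrak}$: we already have $\alpha^* - \alpha \in p\Zz_p$, and now $\alpha^*, \alpha \in E$, so $\alpha^* - \alpha \in E \cap p\Zz_p = \mfrak$. The main point to get right is the differentiation step $\der\bigl(g(\alpha^*)\bigr) = g^{\der}(\alpha^*) + g'(\alpha^*)\,\der\alpha^*$, which is exactly where the definition of $E$ (closure of its elements under $\der$ into $\Zz_p$) interacts with differentiating $g(\alpha^*)=0$, and is what pins the Hensel root inside $E$ rather than merely in $\Zz_p$. Everything else is routine bookkeeping about the maximal ideal and units, so I do not expect a serious obstacle beyond verifying that this algebraic identity lands in $\Zz_p$.
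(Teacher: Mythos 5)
Your proposal is correct and follows essentially the same route as the paper: invoke Henselianity of $\Zz_p$ to obtain the root $\alpha^*$, then differentiate $g(\alpha^*)=0$ to get $\der\alpha^* = -(\der g)(\alpha^*)/g'(\alpha^*)$ and conclude $\der\alpha^* \in \Zz_p$ from $(\der g)(\alpha^*) \in \Zz_p$ and $g'(\alpha^*) \in \Zz_p^\times$. The final bookkeeping that $\alpha^* \equiv \alpha \pmod{\mfrak}$ via $\mfrak = E \cap p\Zz_p$ also matches the paper.
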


\begin{proof}
Given $g \in \Qq_p[x], g(x) = a_0 + a_1 x + \ldots + a_d x^d$ we let $\der g \in \Qq_p[x]$ be $\der(a_0) + \der(a_1) x + \ldots + \der(a_d) x^d$.
Note that if $g \in E[x]$ then $\der g \in \Zz_p[x]$.
As above we let $\mfrak$ be the maximal ideal of $E$.

Fix $g \in E[x]$ and $a \in E$ such that $g(a) \equiv 0 \pmod{\mfrak}$ and $g'(a) \not\equiv 0 \pmod{\mfrak}$.
We will produce $a^* \in E$ such that $g(a^*) = 0$ and $a^* \equiv a \pmod{\mfrak}$.
As $\mfrak = E \cap  p\Zz_p$ we have $g(a) \equiv 0 \pmod{p\Zz_p}$ and $g'(a) \not\equiv 0 \pmod{p\Zz_p}$.
As $\Zz_p$ is Henselian there is $a^* \in \Zz_p$ such that $g(a^*) = 0$ and $a^* \equiv a \pmod{p\Zz_p}$.
It suffices to show that $a^* \in E$: as $\mfrak = E \cap p \Zz_p$ this will yield $a^* \equiv a \pmod{\mfrak}$, and hence that $E$ is Henselian. To show that $a^* \in E$, it is enough to show that $\der a^* \in \Zz_p$.
We have
\[
0 = \der(0) = \der(g(a^*)) = (\der g)(a^*) + g'(a^*) \der(a^*)
\]
hence
\[
\der(a^*) = \frac{(\der g)(a^*)}{-g'(a^*)}.
\]
As $g \in E[x]$, we have $\der g \in \Zz_p[x]$, hence $(\der g)(a^*) \in \Zz_p$.
As $a^* \equiv a \pmod{p\Zz_p}$ we have $g'(a^*) \equiv g'(a) \not \equiv 0 \pmod{p\Zz_p}$, and so $g'(a^*) \in \Zz_p^\times$. Therefore $\der a^* \in \Zz_p$.
\end{proof}

It remains to show that $E$ is one-dimensional Noetherian.  We will use the following fact.

\begin{fact}
\label{fact:cohen}
Suppose that $R$ is a domain, $R$ is not a field, and there is $n$ such that every ideal in $R$ admits an $n$ element generating set.
Then $R$ is one-dimensional Noetherian.
\end{fact}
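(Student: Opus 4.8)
The plan is to establish the two conclusions separately. That $R$ is Noetherian is immediate from the hypothesis: every ideal admits an $n$-element generating set, so in particular every ideal is finitely generated. Since $R$ is a domain but not a field, it has a nonzero non-unit, so the zero ideal is a prime strictly contained in some maximal ideal and hence $\dim R \ge 1$. It therefore remains only to show $\dim R \le 1$, which I would prove by contradiction: assuming $\dim R \ge 2$, I will produce an ideal requiring more than $n$ generators. The first move is to localize. Fixing a chain of primes of length two, one obtains a prime $\pfrak$ of height at least $2$; passing to $R_\pfrak$ yields a Noetherian local domain of dimension at least $2$, and I write $\mfrak$ for its maximal ideal.

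The point of localizing is that the $n$-generator hypothesis survives the passage to $R_\pfrak$. Every ideal of a localization is extended from the base ring, so each ideal $J$ of $R_\pfrak$ has the form $I R_\pfrak$ for some ideal $I$ of $R$; if $a_1,\ldots,a_n$ generate $I$, then their images generate $J$. Consequently every ideal of the local domain $R_\pfrak$ is generated by at most $n$ elements. It thus suffices to exhibit, in a Noetherian local ring of dimension at least $2$, an ideal that cannot be generated by any fixed number of elements.

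For this I would use the powers $\mfrak^m$ of the maximal ideal. By Nakayama's lemma the minimal number of generators of $\mfrak^m$ equals $\dim_{R_\pfrak/\mfrak}(\mfrak^m/\mfrak^{m+1})$, i.e.\ the value at $m$ of the Hilbert function of $R_\pfrak$. By Hilbert--Samuel theory this function agrees, for large $m$, with a polynomial of degree $\dim R_\pfrak - 1 \ge 1$, and hence tends to infinity; choosing $m$ with $\dim_{R_\pfrak/\mfrak}(\mfrak^m/\mfrak^{m+1}) > n$ contradicts the previous paragraph. The main obstacle is precisely this last step: controlling the minimal number of generators of $\mfrak^m$ is where the dimension hypothesis is genuinely used, and it rests on the fact that the Hilbert--Samuel polynomial has degree equal to the Krull dimension. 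By contrast, the reduction to the local case and the transfer of the $n$-generator property through localization are routine once one recalls that every ideal of a localization is extended.
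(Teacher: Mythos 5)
Your proof is correct. Note that the paper itself gives no proof of this statement: it is quoted as a theorem of Cohen (citing his paper on rings with restricted minimum condition), so there is no in-paper argument to compare against. Your self-contained route is sound: Noetherianity and $\dim R\ge 1$ are immediate, and for the bound $\dim R\le 1$ you localize at a prime of height $\ge 2$, correctly observing that the $n$-generator hypothesis survives because every ideal of $R_\pfrak$ is extended from $R$. The contradiction then comes from the standard facts that the minimal number of generators of $\mfrak^m$ equals $\dim_{R_\pfrak/\mfrak}(\mfrak^m/\mfrak^{m+1})$ (Nakayama) and that this Hilbert function eventually agrees with a polynomial of degree $\dim R_\pfrak-1\ge 1$ with positive leading coefficient, hence is unbounded. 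This is essentially the modern textbook proof of Cohen's dimension bound, and every step is correctly justified; the only cosmetic remark is that you could note explicitly that the leading coefficient of the Hilbert polynomial is positive (which follows since the function is nonnegative), so that ``degree $\ge 1$'' really does give divergence to $+\infty$.
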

Fact~\ref{fact:cohen} is a theorem of Cohen~\cite[Corollary 1, Theorem 9 and Theorem 10]{cohen-rm}.  To apply Fact~\ref{fact:cohen}, we will show that every ideal is generated by two elements.  We first prove a technical lemma.

\begin{lemma} \label{3-to-2}
Suppose $a, a', a'' \in \Qq_p$, $v(a) \le \min\{v(a'),v(a'')\}$ and $v(\der(a'/a)) \le v(\der(a''/a))$.  
Then $a'' \in aE + a'E$.
\end{lemma}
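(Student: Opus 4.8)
The plan is to normalize by $a$ and reduce the statement to a single $p$-adic approximation problem that the density lemma solves. First I would dispose of the degenerate case $a = 0$: the hypothesis $v(a) \le \min\{v(a'),v(a'')\}$ then forces $a' = a'' = 0$ and the conclusion is trivial, so I assume $a \ne 0$ and set $b' = a'/a$ and $b'' = a''/a$. The inequality $v(a)\le\min\{v(a'),v(a'')\}$ gives $v(b'),v(b'')\ge 0$, so $b',b''\in\Zz_p$. Dividing the target identity by $a$ and using $a' = ab'$, the goal $a''\in aE + a'E$ is equivalent to finding $x,y\in E$ with $b'' = x + b'y$ (then $a'' = ax + a'y$). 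So the task becomes an approximation problem inside $\Zz_p$: write $b''$ as $x + b'y$ with both coefficients lying in the subring $E$.

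Next I would isolate the single nontrivial constraint. Taking $x = b'' - b'y$, membership $x\in\Zz_p$ is automatic once $y\in\Zz_p$, since $b',b'',y\in\Zz_p$; the only real requirement is $\der x\in\Zz_p$. Computing $\der x = \der b'' - (\der b')\,y - b'\,\der y$ and observing that $b'\,\der y\in\Zz_p$ whenever $y\in E$ (as $b'\in\Zz_p$ and $\der y\in\Zz_p$), the condition $x\in E$ collapses to $\der b'' - (\der b')\,y\in\Zz_p$. If $\der b' = 0$, then the hypothesis $v(\der b')\le v(\der b'')$ forces $\der b'' = 0$, whence $b''\in E$ and I take $y = 0$, $x = b''$. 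Otherwise I set $q = \der b''/\der b'$, which lies in $\Zz_p$ precisely because $v(\der b'')\ge v(\der b')$; then $\der b'' - (\der b')\,y = (\der b')(q - y)$ has valuation $v(\der b') + v(q-y)$, so $x\in E$ as soon as I can locate $y\in E$ with $v(q - y)\ge -v(\der b')$.

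The crux is producing such a $y$, and this is exactly where I would invoke Lemma~\ref{lem:density}. The subtlety, and the main obstacle, is that $b'$ need not lie in $E$, so $\der b'$ may have negative valuation; the naive choice $y = q$ would make $\der x\in\Zz_p$ but need not satisfy $y\in E$, and the required approximation precision $-v(\der b')$ is genuinely positive in that case. To overcome this I would use the $p$-adic density of the graph $\{(c,\der c) : c\in\Qq_p\}$ in $\Qq_p^2$ to approximate the target point $(q,0)$: choose $y\in\Qq_p$ with $v(y - q)\ge\max\{0,-v(\der b')\}$ and $v(\der y)\ge 0$. Then $v(\der y)\ge 0$ gives $\der y\in\Zz_p$, while $v(y)\ge\min\{v(q),v(y-q)\}\ge 0$ gives $y\in\Zz_p$, so $y\in E$; and $v(q-y)\ge -v(\der b')$ yields $(\der b')(q-y)\in\Zz_p$, hence $\der x\in\Zz_p$ and $x\in E$. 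Setting $x = b'' - b'y$ then gives $b'' = x + b'y$ and therefore $a'' = ax + a'y\in aE + a'E$, completing the argument.
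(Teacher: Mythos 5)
Your proof is correct and follows essentially the same route as the paper: normalize by $a$, set the quotient $q = \der b''/\der b'$ (the paper's $b$ with $\der a'' = b\,\der a'$), and use Lemma~\ref{lem:density} to replace it by a nearby element of $E$ so that the remainder $x = b'' - b'y$ also lands in $E$. The only cosmetic differences are that you make the required approximation precision $\max\{0,-v(\der b')\}$ explicit where the paper appeals to continuity of multiplication, and you treat the case $\der b' = 0$ separately.
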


\begin{proof}
We may assume $a'' \ne 0$.
Then $v(a) \le v(a'') < \infty$, so $a \ne 0$.
After replacing $a, a', a''$ by $a/a, a'/a, a''/a$, we may suppose $a = 1$.
Then $0 = v(a) \le \min\{v(a'),v(a'')\}$, so $a', a'' \in \Zz_p$.  Additionally, $v(\der a') \le v(\der a'')$.
This yields $b \in \Zz_p$ such that $\der (a'') = b \der(a')$.
By continuity of multiplication there is a $p$-adically open neighbourhood $U \subseteq \Zz_p$ of $b$ such that if $b^* \in U$ then $\der(a'') - b^* \der(a') \in \Zz_p$.
By Lemma~\ref{lem:density} there is $b^* \in U$ such that $\der(b^*) \in \Zz_p$.
Then $b^* \in E$. 
Let $c = a'' - b^*a'$.
Then $a'' = c + b^* a'$.
We claim that $c \in E$, so that $a'' \in E + a' E$.
We have $c \in \Zz_p$, as $a',a'',b^* \in \Zz_p$.
We have
\[
\der(c) = \der(a'') - \der(b^*a') = \der(a'') - b^* \der(a') - a'\der(b^*).
\]
Note that $\der(a'') - b^*\der(a')$ and $a'\der(b^*)$ are both in $\Zz_p$.
Hence $\der c \in \Zz_p$, so $c \in E$.
\end{proof}

\begin{proposition}
$E$ is one-dimensional Noetherian.
\end{proposition}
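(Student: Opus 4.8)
The plan is to apply Fact~\ref{fact:cohen} with $n = 2$: I will show that $E$ is a domain, that $E$ is not a field, and that every ideal of $E$ admits a two-element generating set. That $E$ is a domain is immediate since it is a subring of the field $\Qq_p$. To see that $E$ is not a field, note that $p \in E$ (as $p \in \Zz_p$ and $\der p = 0 \in \Zz_p$ by $\Qq$-linearity) and that $p$ lies in the maximal ideal $\mfrak = E \cap p\Zz_p$ of Lemma~\ref{lem:max-ideal}, so $p$ is a nonzero non-unit.

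The heart of the argument is to show that an arbitrary nonzero ideal $I$ of $E$ is two-generated, and here Lemma~\ref{3-to-2} does the work. First I would choose $a \in I \setminus \{0\}$ with $v(a)$ minimal; such an $a$ exists because $v$ is non-negative and integer-valued on $E \setminus \{0\}$, so it attains a minimum on $I \setminus \{0\}$. Next I would choose $a' \in I \setminus \{0\}$ minimizing $v(\der(a'/a))$. For this second minimum to be attained I need $v(\der(x/a))$ to be bounded below as $x$ ranges over $I \setminus \{0\}$; this is the one point requiring a short computation. Writing $\der(x/a) = \der(x)/a - x\,\der(a)/a^2$ and using that $x, \der x, \der a \in \Zz_p$ together with $v(x) \ge v(a)$, one finds $v(\der(x)/a) \ge -v(a)$ and $v(x\,\der(a)/a^2) \ge -v(a)$, whence $v(\der(x/a)) \ge -v(a)$. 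This bound is independent of $x$, so the infimum is a finite integer and is attained by some $a'$.

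With $a$ and $a'$ in hand, I claim $I = aE + a'E$. The inclusion $aE + a'E \subseteq I$ is immediate since $a, a' \in I$ and $I$ is an ideal. For the reverse inclusion, fix $a'' \in I$; we may assume $a'' \ne 0$, the case $a'' = 0$ being trivial. By minimality of $v(a)$ we have $v(a) \le \min\{v(a'), v(a'')\}$, and by minimality of $v(\der(a'/a))$ we have $v(\der(a'/a)) \le v(\der(a''/a))$. These are exactly the hypotheses of Lemma~\ref{3-to-2}, which therefore yields $a'' \in aE + a'E$. Hence $I = aE + a'E$ is generated by two elements, and Fact~\ref{fact:cohen} gives that $E$ is one-dimensional Noetherian.

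I expect the only delicate point to be the verification that the second minimization is well-posed, i.e. the lower bound $v(\der(x/a)) \ge -v(a)$ that guarantees $v(\der(\,\cdot\,/a))$ attains a minimum on $I \setminus \{0\}$; once this is in place, Lemma~\ref{3-to-2} reduces the rest of the proof to bookkeeping.
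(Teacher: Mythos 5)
Your proof is correct and follows essentially the same route as the paper: choose $a \in I\setminus\{0\}$ minimizing $v(a)$, then $a'$ minimizing $v(\der(a'/a))$ (justified by a lower bound on $v(\der(x/a))$ — yours gives $\ge -v(a)$, the paper's computation gives $\ge -2v(a)$, and either suffices), and conclude via Lemma~\ref{3-to-2} and Fact~\ref{fact:cohen}. The extra checks that $E$ is a domain and not a field are a welcome bit of explicitness but not a substantive difference.
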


\begin{proof}
By Fact~\ref{fact:cohen} it suffices to show that any ideal $I$ in $E$ has a two element generating set.
We may suppose $I \ne \{0\}$.
As $E \subseteq \Zz_p$ we have $v(a) \ge 0$ for all $a \in I$.
Fix $a \in I$ minimizing $v(a)$; note that $a \ne 0$.
For any $a^* \in I$ we have
\[
v(\der(a^*/a)) = v \left( \frac{a\der(a^*) - a^*\der(a)}{a^2} \right) = v(a\der(a^*) - a^*\der(a)) - 2v(a).
\]
As $a,a^*,\der (a), \der(a^*) \in \Zz_p$ we have $v(a\der(a^*) - a^*\der(a)) \ge 0$, hence $v(\der(a^*/a)) \ge -2v(a)$.
Therefore we may select $a' \in I$ minimizing $v(\der(a'/a))$.
We show that $I = aE + a'E$.
Fix $a'' \in I$.
Then $v(a) \le \min \{ v(a'), v(a'')\}$ and $v(\der(a'/a)) \le v(\der(a''/a))$.
Apply Lemma~\ref{3-to-2}.
\end{proof}

\begin{remark}
\label{remark}
Our ring $E$ is similar to Ferrand and Raynaud's example of a Noetherian 1-dimensional local domain with non-reduced completion~\cite[Proposition~3.1]{ferrand-raynaud}.
Their example probably satisfies an analogue of Theorem~\ref{thm:example}, with $\Cc\{t\}$ replacing $\Qq_p$.
Likewise, our $E$ also has non-reduced completion, by Lemma~\ref{lem:completion} below.
This implies that $E$ is not excellent \cite[07QT, 07GH, 07QK]{stacks-project} and not regular \cite[07NY, 00NP]{stacks-project}.
\end{remark}
\begin{lemma} \label{lem:completion}
The completion of $E$ is $\Zz_p[x]/(x^2)$.
\end{lemma}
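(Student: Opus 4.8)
The plan is to exhibit an explicit ring homomorphism from $E$ into $A := \Zz_p[x]/(x^2)$ and show that it induces an isomorphism on $\mfrak$-adic completions. Since $\der$ is a derivation, the ``dual numbers'' map $\psi \colon E \to A$, $\psi(a) = a + (\der a)x$, is a ring homomorphism (Leibniz makes it multiplicative because $x^2 = 0$), and it is injective. For $a \in E$ we have $a \in \Zz_p$ and $\der a \in \Zz_p$, so $\psi$ does land in $A$; moreover $\psi(\mfrak) \subseteq \mathfrak{M}$, where $\mathfrak{M} = (p,x)$ is the maximal ideal of $A$. Note that $\mathfrak{M}^n = p^n \Zz_p + p^{n-1}\Zz_p x$, that $A$ is $\mathfrak{M}$-adically complete, and that $A/\mathfrak{M}^n \cong \Zz/p^n \oplus (\Zz/p^{n-1})x$. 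I would deduce the lemma by showing that $\psi$ induces an isomorphism $E/\mfrak^n \to A/\mathfrak{M}^n$ for every $n$ and passing to inverse limits.

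Surjectivity of $E/\mfrak^n \to A/\mathfrak{M}^n$ is the easy half: it amounts to $\psi(E) + \mathfrak{M}^n = A$, which follows from Lemma~\ref{lem:density}. Indeed, given $c + dx \in A$ with $c,d \in \Zz_p$, density of the graph of $\der$ produces $a \in \Qq_p$ with $a \equiv c$ and $\der a \equiv d \pmod{p^n}$; then $a \in E$ and $\psi(a) \equiv c + dx \pmod{\mathfrak{M}^n}$. The content of the lemma is therefore injectivity, i.e.\ the identity $\mfrak^n = \psi^{-1}(\mathfrak{M}^n)$, equivalently $\mfrak^n = \{a \in E : v(a) \ge n,\ v(\der a) \ge n-1\}$. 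The inclusion $\subseteq$ is immediate from $\psi(\mfrak) \subseteq \mathfrak{M}$; the reverse inclusion is the crux.

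To prove $\{a \in E : v(a) \ge n,\ v(\der a) \ge n-1\} \subseteq \mfrak^n$ I would first use Lemma~\ref{lem:density} to fix $u \in E$ with $v(u) = 1$ and $\der u \in \Zz_p^\times$, so that $u = pw_0$ with $w_0 \in \Zz_p^\times$. A short computation in $\mathfrak{M}/\mathfrak{M}^2$ (using that $\der u$ is a unit) shows $\mfrak = pE + uE$. The key technical input is the relation $u^2 \in p^2 E + puE$: writing $u^2 = p^2 c + pud$ reduces via $u = pw_0$ to finding $d \in E$ with $c := w_0^2 - w_0 d \in E$, and since $v(\der w_0) = -1$ this holds precisely when $d \equiv 2w_0 \pmod p$, which density supplies. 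Granting these two facts, I would argue by induction that $\psi^{-1}(\mathfrak{M}^n) = p^n E + p^{n-1}u E = \mfrak^n$: if $a$ satisfies $v(a)\ge n+1$ and $v(\der a)\ge n$, then $a \in \psi^{-1}(\mathfrak{M}^n) = p^n E + p^{n-1}uE$ by induction, say $a = \alpha p^n + \beta p^{n-1}u$; comparing valuations in $a = p^n(\alpha + \beta w_0)$ and in $\der a = p^{n-1}\beta(\der u) + p^n(\cdots)$ forces $\alpha,\beta \in \mfrak$, and then $u^2 \in p^2E + puE$ lets one absorb the resulting $p^{n-1}u^2$ term into $p^{n+1}E + p^n u E$.

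The main obstacle is this last identification of the powers $\mfrak^n$: one must pin down $\psi^{-1}(\mathfrak{M}^n)$ tightly enough to match $\mfrak^n$, and the only subtle ingredient is the nonobvious membership $u^2 \in p^2E + puE$, which is exactly where the derivation $\der$ (through $v(\der w_0) = -1$) and the density lemma enter. Once $\mfrak^n = \psi^{-1}(\mathfrak{M}^n)$ is established, $E/\mfrak^n \cong A/\mathfrak{M}^n$ for all $n$, and taking inverse limits gives $\widehat E \cong \varprojlim A/\mathfrak{M}^n = A = \Zz_p[x]/(x^2)$, since $A$ is $\mathfrak{M}$-adically complete.
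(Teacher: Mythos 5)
Your proof is essentially correct, but it takes a genuinely different route from the paper's. The paper uses the same dual-numbers map $a \mapsto a + \der(a)x$ and the same density lemma, but instead of computing the associated graded pieces it shows that the map is a \emph{dense topological embedding} of $(E,\mfrak\text{-adic})$ into $(\Zz_p[x]/(x^2), p\text{-adic})$ and reads off the completion from that; the topological-embedding half is outsourced to the ``diffeovaluation topology'' machinery of \cite[Propositions 8.19 and 8.21]{dp-finite-iv}, after first checking that the $\mfrak$-adic topology on $E$ is the restriction of the $E$-adic topology (via $\dim E = 1$ and Artinianity of $E/I$). Your version replaces that citation with the explicit identification $\mfrak^n = p^nE + p^{n-1}uE = \psi^{-1}(\mathfrak{M}^n)$, driven by the two facts $\mfrak = pE+uE$ and $u^2 \in p^2E + puE$; the latter is correctly reduced to choosing $d \in E$ with $d \equiv 2w_0 \pmod p$, which density supplies, and your induction (reading off $\alpha,\beta \in \mfrak$ from the valuations of $a$ and $\der a$, then absorbing $p^{n-1}u^2$) goes through. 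What your approach buys is self-containedness: it needs nothing beyond Lemma~\ref{lem:density} and elementary valuation bookkeeping, at the cost of a longer computation. One small repair: the claim $\mfrak = pE + uE$ does not really follow from ``a short computation in $\mathfrak{M}/\mathfrak{M}^2$'' --- a Nakayama-style argument would presuppose knowledge of $\mfrak^2$, which is what you are trying to compute. But the claim is true and is an immediate instance of Lemma~\ref{3-to-2} with $(a,a',a'') = (p,u,a'')$, since $v(\der(u/p)) = v(\der u) - 1 = -1 \le v(\der(a''/p))$ for every $a'' \in \mfrak$; you should cite that lemma instead.
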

\begin{proof}
It is enough to produce a ring embedding $\uptau \colon E \to \Zz_p[x]/(x^2)$ such that $\uptau$ gives a dense topological embedding from the $\mfrak$-adic topology on $E$ to the $p$-adic topology on $\Zz_p[x]/(x^2)$.
Let $\uptau \colon E \to \Zz_p[x]/(x^2)$ be $\uptau(a) = a + \der(a) x$.
Note that $\uptau$ is an injective ring homomorphism.
The $p$-adic topology on $\Zz_p[x]/(x^2)$ agrees with the product topology given by the natural bijection $\Zz_p[x]/(x^2) \to \Zz_p^2$.  
By Lemma~\ref{lem:density} the image of $\uptau$ is dense.
The $\mfrak$-adic topology on $E$ agrees with the restriction of the $E$-adic topology on $\Qq_p$ to $E$.
(It suffices to show that for any non-zero ideal $I$ in $E$ we have $\mfrak^n \subseteq I$ for some $n$.
This holds because $E/I$ is a local Artinian ring, as $\dim E = 1$.)
Meanwhile, $(\Qq_p,\Zz_p,\partial)$ is a ``lifted diffeovalued field'' (see \cite[Definition 8.10]{dp-finite-iv}\footnote{In~\cite[Section 8]{dp-finite-iv}, the residue characteristic $0$ assumption was only used in~\cite[Proposition 8.9]{dp-finite-iv}. Hence the definitions and results quoted in the proof are unaffected.}) that is ``dense'' (see \cite[Definition 8.12]{dp-finite-iv}) by~\cite[Proposition 8.19]{dp-finite-iv} and Lemma~\ref{lem:density} above.  The $E$-adic topology on $\Qq_p$ is the ``diffeovaluation topology'' (see~\cite[Definition 8.16]{dp-finite-iv}). Therefore~\cite[Proposition~8.21]{dp-finite-iv} shows that the collection of sets of the form $\{ \alpha \in E : v(\beta - \alpha) > \gamma \text{  and  } v(\beta^* - \der \alpha) > \gamma^* \}$ for $\beta,\beta^* \in E$ and $\gamma, \gamma^* \in \Zz$ is a basis for the $E$-adic topology on $\Qq_p$.  This implies that $\uptau : E \to \Zz_p^2$ is a topological embedding.

\end{proof}

\begin{remark}
We discuss our usage of the axiom of choice.
Note that $\der$ is a discontinuous additive homomorphism $\Qq_p \to \Qq_p$, because $\der$ is non-zero but vanishes on the dense set $\Qq \subseteq \Qq_p$.
This implies that $\der$ is not measurable, hence existence of $\der$ requires a strong application of the axiom of choice; see for example \cite[Section~2]{automatic-continuity}.
One can avoid this.
In fact, our argument goes through for $R$ a characteristic zero Henselian DVR, $K = \Frac(R)$, and $\der \colon K \to K$ a derivation with dense graph.
For example fix $t \in \Qq_p$ transcendental over $\Qq$ and let $F$ be the algebraic closure of $\Qq(t)$ in $\Qq_p$.
Then $F$ is a dense subfield of $\Qq_p$ and $F \cap \Zz_p$ is a Henselian DVR with fraction field $F$.
Let $\der^*$ be the unique $\Qq$-linear derivation $\Qq(t) \to \Qq(t)$ with $\der^* t = 1$.
Then $\der^*$ uniquely extends to a $\Qq$-linear derivation $\der\colon F \to F$ as $F/\Qq(t)$ is algebraic~\cite[Proposition~1.15]{weil-fga}.
This extension does not require choice.
Our example goes through with $\Zz_p,\Qq_p$ replaced by $F \cap \Zz_p, F$, respectively.  At the end, one obtains a $p$-adically closed field $F$ of transcendence degree 1 and a subring $E \subseteq F$ satisfying the conclusions of Theorem~\ref{thm:example}, with $\Qq_p$ replaced by $F$.\end{remark}

\section{Final remark}
\label{section:final}
We showed in \cite[Theorem B]{firstpaper} that the \'etale open topology over $K$ is induced by a V-topology if and only if $K$ is infinite, t-Henselian, and not separably closed (t-Henselianity is a topological generalization of Henselianity, see \cite[Section 7]{Prestel1978}).
We have produced examples such as $L((t_1,\ldots,t_n))$ for $n > 1$ where the \'etale open topology is induced by a field topology that is not a V-topology.
We do not know a general criterion for when the \'etale open topology is induced by a field topology.

\bibliographystyle{amsalpha}
\bibliography{ref}

\end{document}